\DeclareSymbolFont{AMSb}{U}{msb}{m}{n}
\documentclass[reqno,centertags,11pt,a4paper,noamsfonts]{amsart}
\pdfoutput=1
\usepackage[svgnames]{xcolor}
\usepackage{graphicx}
\usepackage[english]{babel}
\usepackage[babel=true,expansion=alltext,protrusion=alltext-nott,final]{microtype}
\usepackage[margin={3cm},marginparwidth={2.5cm}]{geometry}
\usepackage[utf8]{inputenc}
\usepackage{libertine}
\usepackage[libertine]{newtxmath}
\useosf
\usepackage[varqu,varl]{inconsolata}
\usepackage[T1]{fontenc}
\usepackage{textcomp}
\usepackage{amsthm}
\usepackage[inline]{enumitem}
\numberwithin{equation}{section}
\usepackage[normalem]{ulem}
\usepackage{tikz}
\usetikzlibrary{arrows}
\usepackage{bm}
\usepackage{soul}

\providecommand{\mr}[1]{\href{http://www.ams.org/mathscinet-getitem?mr=#1}{MR~#1}}
\providecommand{\zbl}[1]{\href{https://zbmath.org/?q=an:#1}{Zbl~#1}}

\newcommand{\RR}{\mathbb{R}}
\newcommand{\RRd}{\mathbb{R}^d}

\newcommand{\NN}{\mathbb{N}}

\newcommand{\HH}{\mathcal{H}}

\newcommand{\LL}{\mathcal{L}}

\newcommand{\1}{\mathbf{1}}

\newcommand{\ytilde}{\tilde{y}}

\newcommand{\xbar}{\overline{x}}

\newcommand{\subscript}[2]{$#1 _ #2$}

\newcommand{\eps}{\varepsilon}
\newcommand{\xdots}{x_{1},\dots,x_{N}}
\newcommand{\ydots}{y_{1},\dots,y_{N}}
\newcommand{\zdots}{z_{1},\dots,z_{N}}
\newcommand{\ytildedots}{\tilde{y}_{1},\dots,\tilde{y}_{N}}
\newcommand{\Prob}{\mathcal{P}}
\newcommand{\xh}{\overline{x}_h}

\newcommand{\xvec}{\mathbf{x}}
\newcommand{\yvec}{\mathbf{y}}
\newcommand{\ytildevec}{\tilde{\mathbf{y}}}
\newcommand{\zvec}{\mathbf{z}}
\DeclareMathOperator{\weaklystar}{\rightharpoonup\kern-2.2ex ^* \, \,}

\DeclareMathOperator{\argmin}{argmin}

\DeclareMathOperator{\spt}{spt}
\DeclareMathOperator{\diam}{diam}


\def\XXint#1#2#3{{\setbox0=\hbox{$#1{#2#3}{\int}$ }
\vcenter{\hbox{$#2#3$ }}\kern-.6\wd0}}

\definecolor{darkred}{rgb}{0.4,0,0} 
\definecolor{darkgreen}{rgb}{0,.4,0}

\theoremstyle{plain}
\newtheorem{theorem}{Theorem}[section]
\newtheorem{proposition}[theorem]{Proposition}
\newtheorem{corollary}[theorem]{Corollary}
\newtheorem{lemma}[theorem]{Lemma}

\newtheorem*{theorem*}{Theorem}

\theoremstyle{definition}
\newtheorem{definition}[theorem]{Definition}
\newtheorem{remark}[theorem]{Remark}

\usepackage[pagebackref=false, debug=false]{hyperref}
\hypersetup{
	linktoc=all,
	colorlinks=true,
	linkcolor=Green,citecolor=Orange,urlcolor=DarkBlue,
	pdftitle={h-Wasserstein barycenters},
	pdfauthor={Camilla Brizzi, Gero Friesecke, Tobias Ried}, 
	pdfsubject={Wasserstein barycenter, optimal transport,  multi-marginal optimal transport},
	pdfkeywords={Wasserstein barycenter, optimal transport,  multi-marginal optimal transport}} 
\begin{document}
\title{$h$-Wasserstein barycenters}

\author[C.~Brizzi]{Camilla Brizzi}
\address[C.~Brizzi]{Technische Universität München, Department of Mathematics, Boltzmannstraße 3, 85748 Garching, Germany}
\email{briz@ma.tum.de}

\author[G.~Friesecke]{Gero Friesecke}
\address[G.~Friesecke]{Technische Universität München, Department of Mathematics, Boltzmannstraße 3, 85748 Garching, Germany}
\email{friesecke@tum.de}

\author[T.~Ried]{Tobias Ried}
\address[T.~Ried]{Max-Planck-Institut für Mathematik in den Naturwissenschaften, Inselstraße 22, 04103 Leipzig, Germany}
\email{Tobias.Ried@mis.mpg.de}
\keywords{Wasserstein barycenter, optimal transport,  multi-marginal optimal transport}
\subjclass[2020]{Primary 49Q20; Secondary 49J40, 49K21} 
\date{\today}
\thanks{\emph{Funding information}: Deutsche Forschungsgemeinschaft (DFG -- German Research Foundation) -- Project-ID 195170736 -- TRR109.\\[1ex]
\textcopyright 2024 by the authors. Faithful reproduction of this article, in its entirety, by any means is permitted for noncommercial purposes.}
\begin{abstract}
	We generalize the notion and theory of Wasserstein barycenters \cite{AC11} from the quadratic cost to general smooth strictly convex costs $h$ with non-degenerate Hessian. 
    We show the equivalence between a coupled two-marginal and a multi-marginal formulation and establish that the multi-marginal optimal plan is unique and of Monge form. 
    To establish the latter result we introduce a new approach which is not based on explicitly solving the optimality system, but instead deriving a quantitative injectivity estimate for the (highly non-injective) map from $N$-point configurations to their $h$-barycenter on the support of an optimal multi-marginal plan.
\end{abstract}
\maketitle
{
\tableofcontents}
\section{Introduction}

Barycenters with respect to the 2-Wasserstein distance, introduced by Agueh and Carlier in \cite{AC11}, are an important concept in data science, statistics, and image processing, and also constitute an important example of multi-marginal optimal transport. Given $N$ input datasets, modelled by probability measures on $\RR^d$, Wasserstein barycenters provide
\begin{itemize}
    \item \emph{a representative summary} (by equal-weight barycentring, corresponding in dimension $d=1$ to histogram equalization);
	\item \emph{a continuous family of interpolates} (by varying-weight barycentring, corresponding for $N=2$ marginals to Wasserstein geodesics).
\end{itemize}

Wasserstein barycenters can be defined in two different ways: in terms of a multi-marginal optimal transport (MMOT) problem -- and therefore a linear programming problem (in a high-dimensional space) --, and in terms of a non-linear variational problem (coupled two-marginal formulation). 

More precisely, let $\mu_1,\dots,\mu_N$, $N\ge2$, be probability measures on $\RR^d$ and consider the multi-marginal optimal transport problem 
\begin{equation}\label{eq:W2-MMOT}
     \min_{\gamma\in\Pi(\mu_1,\dots,\mu_N)}\int_{\RR^{Nd}} \sum_{i=1}^N \lambda_i |x_i - \xbar_2(x_1, \dots, x_N)|^2 \,d\gamma(x_1,\dots,x_N),
 \end{equation}
where
 \begin{equation*}
     \Pi(\mu_1,\dots,\mu_N):=\{\gamma\in\Prob(\RR^{Nd}) \, : \, \pi^i_\sharp \gamma = \mu_i \}
 \end{equation*}
 is the set of admissible transport plans between the marginals $\mu_1, \dots, \mu_N$, and 
 \begin{equation} \label{eq:W2-costfctn}
    \xbar_2(x_1, \dots, x_N) := \argmin_{z\in\RR^d} \sum_{i=1}^N \lambda_i |x_i-z|^2(= \sum_{i=1}^N \lambda_i x_i)
\end{equation}
is the classical (Euclidean) barycenter of the points $x_1, \dots, x_N \in \RR^d$ with weights $\lambda_1, \dots, \lambda_N>0$ such that $\sum_{i=1}^N \lambda_i = 1$.

The existence of an optimal transport plan $\overline{\gamma}$ is a standard result in optimal transport which can be straightforwardly generalized to the multi-marginal setting, see for instance \cite{F24}. 
Given any optimal plan $\overline{\gamma}$, the probability measure $(\overline{x}_2)_{\#}\overline{\gamma}$ on $\RR^d$ is called $W_2$-barycenter of $\mu_1, \dots, \mu_N$ with weights $\lambda_1, \dots, \lambda_N$. 

Equivalently, one can consider the variational problem of minimizing the mean-squared error
\begin{align}\label{eq:W2-C2M}
	\min_{\nu \in \mathcal{P}(\RR^d)} \sum_{i=1}^N \lambda_i W_2^2(\mu_i, \nu)
\end{align}
over probability measures $\nu \in \mathcal{P}(\RR^d)$. Then the two minimization problems \eqref{eq:W2-MMOT} and \eqref{eq:W2-C2M} are equivalent in the sense that their optimal values agree and $\overline{\nu}$ is a minimizer of the coupled two-marginal formulation \eqref{eq:W2-C2M} if and only if $\overline{\nu} = (\overline{x}_2)_{\#}\overline{\gamma}$ for a minimizer $\overline{\gamma}$ of \eqref{eq:W2-MMOT}, see \cite[Proposition 4.2]{AC11}. This equivalence tells us that by paying the price of increasing the dimension we can transform an highly non-linear problem in a linear one.

That is why a main interesting feature of the MMOT problem associated with $W_2$-barycenters is the \emph{sparsity} of the minimizer, as proved in the seminal paper by Gangbo and \'Swie\k{}ch \cite{GS98}. If the marginals $\mu_1, \dots, \mu_N$ are absolutely continuous with respect to Lebesgue measure\footnote{or, as proved later by Agueh and Carlier \cite{AC11}, at least one of the marginals is absolutely continuous}, then the optimal plan, instead of being fully supported in $\RR^{dN}$, is supported on  a graph over the first marginal space, 
\begin{equation}\label{eq:GSform}
   \overline{\gamma} = (\mathrm{Id}, T_2, \dots, T_N)_{\#}\mu_1
\end{equation}
for some functions $T_i: \mathrm{supp} \mu_1 \to \mathrm{supp} \mu_i$, $i=2, \dots, N$. This amounts to a dramatic decrease of complexity. 

The Gangbo--\'Swie\k{}ch result can be viewed as a multi-marginal Brenier's theorem. The proof strongly relies on the quadratic form of the cost, allowing explicit manipulation of the optimality system and reducing complicated $c$-convexity properties of the Kantorovich potentials to standard convexity.

Here we generalize the Gangbo--\'Swie\k{}ch theorem to barycenters with respect to general convex interactions, corresponding to the following replacements in \eqref{eq:W2-MMOT} and \eqref{eq:W2-costfctn}:
\begin{eqnarray*}
   |x_i-\xbar|^2 & \rightsquigarrow & h(x_i-\xbar) \\
   \xbar = \argmin_z \sum_i \lambda_i |x_i-z|^2 & \rightsquigarrow &  \xbar = \argmin_z \sum_i \lambda_i h(x_i-z).
\end{eqnarray*}
We show that optimal plans are still of the form \eqref{eq:GSform}. This can be viewed as a multi-marginal version of the Gangbo-McCann theorem, see \cite{GMC96}. 

The proof is given 
\begin{enumerate}[label=(\roman*)]
	\item for strictly convex $\mathcal{C}^2$ functions $h$ with nondegenerate Hessian in this article,
	\item for the Wasserstein $W_p$ distances with $1<p<\infty$ in our companion paper \cite{BFR24}. 
\end{enumerate}

The main result in case (i) could alternatively be deduced from the more abstract results by Pass \cite{Pas14}, by checking that our cost \eqref{eq:W2-C2M}-\eqref{eq:W2-costfctn} satisfies the hypotheses in \cite{Pas14}. This approach relies on the deep result by Pass \cite{Pas12} on the dependence of the local support dimension of optimal plans on a matrix of certain (but not all) 2nd derivatives of the cost function. 
As in \cite{Pas12} we use  $c$-monotonicity, but avoid the machinery of geometric measure theory.
Our proof is new and we believe simpler, relying on a direct quantitative injectivity estimate of the map from $N$ points to their barycenter on the support of optimal plans, which in turn relies on a monotonicity property of this map with respect to each of the $N$ points. This injectivity has the interesting statistical meaning that the mass of the barycenter at any given point only comes from a unique $N$-point configuration carrying mass of the marginals. Our approach -- unlike that in \cite{Pas12} -- extends to degenerate/singular situations, in particular allowing the treatment of $W_p$-barycenters. An interesting insight in our analysis --  which we  actually use in our proof of \eqref{eq:GSform} -- is that the barycenter with respect to any convex cost as in (i) and (ii) is itself absolutely continuous. This generalizes  a corresponding insight of Agueh and Carlier \cite{AC11} for the quadratic cost.  

In a different direction, Kim and Pass \cite{KP17} 
prove the absolute continuity of classical Wasserstein barycenters on compact Riemannian manifolds by exploiting special properties of the Riemannian distance squared.\footnote{More generally, such a result holds on compact Alexandrov spaces with curvature at least $-1$, see \cite{Jia17}.}

Finally we hope that our new approach might also prove useful in other multi-marginal problems of interest such as the Coulomb cost \cite{FGG22} or the generalized Euler equations \cite{B89}, where the amount of sparsity of optimal plans is an open question.

\medskip
The rest of the article is structured as follows: In the next section we state our main result precisely. In Section~\ref{sec:preliminaries} we recall basic properties of multi-marginal optimal transport. The equivalence of the coupled two-marginal formulation and MMOT formulation of $h$-barycenters is presented in Section~\ref{sec:equivalence}. In Section~\ref{sec:main} we give the detailed proof of the absolute continuity of the $W_h$-barycenter, which is the main ingredient in the proof of Theorem~\ref{th:sparsityplan}.

\section{Main result}
\subsection{$h$-barycenters}
From now on, we consider functions $h:\RRd\to\RR$ with the following properties: 
\begin{enumerate}[label=(\subscript{A}{\arabic*})]
	\item\label{A1}\textit{nonnegativity}: $h\ge 0$,
	\item\label{A2}\textit {strict convexity}: $h$ strictly convex,
	\item\label{A3}\textit{coercivity}:  $\lim_{|z|\to+ \infty}h(z)=+\infty$.
\end{enumerate}
Under these assumptions, the notion of barycenter of given points $x_1, \dots, x_N \in \RR^d$, $N\geq 2$, with respect to weights $\lambda_1, \dots, \lambda_N >0$ such that $\sum_{i=1}^N \lambda_i = 1$, can easily be generalized.
\begin{definition}[$h$-barycenter]
We call the map $\xh:\RR^{Nd}\to\RRd$, defined by
\begin{equation}\label{eq:hbary} \tag{$h$-bar}
   \overline{x}_h(x_1,\dots,x_N):=\underset{z\in\RRd}\argmin\sum_{i=1}^{N}\lambda_i h(x_i-z),
\end{equation}
$h$-barycenter of the points $x_1, \dots, x_N$. 
\end{definition}
Observe that under the assumptions \ref{A1}-\ref{A3}, $\overline{x}_h(x_1,\dots,x_N)$ exists and is unique for every $\xvec=(\xdots)\in\RR^{Nd}$.

We now introduce the associated non-negative cost function $c_h:\RR^{Nd}\to\RR$ defined by  
\begin{equation}\label{eq:ch}
    c_h(\xdots):=\min_{z\in\RRd}\sum_{i=1}^{N}\lambda_i h(x_i-z)=\sum_{i=1}^{N}\lambda_i h(x_i-\overline{x}_h(\mathbf{x})).
\end{equation}
Given compact subsets $X_1,\dots,X_N$ of $\RRd$ and $\mu_1,\dots,\mu_N$ probability measures such that $\mu_i\in\Prob(X_i)$, $i=1,\dots, N$, we are interested in the variational problem
\begin{equation}\label{eq:MMhbary}\tag{MM-$h$-bar}
	C_{h{\text{-MM}}}:=\min_{\gamma\in\Pi(\mu_1,\dots,\mu_N)}\int_{\RR^{Nd}} c_h(\xdots)\,d\gamma.
\end{equation}
\begin{remark}
	The $h$-barycenter map $\xh$, and therefore $c_h$, is continuous, hence by standard results on multi-marginal optimal transport, problem \eqref{eq:MMhbary} admits a solution, see Proposition~\ref{prop:existenceMMOT}.
\end{remark}
Given the $h$-barycenter $\xh$, we can lift it to the space of probability measures by introducing the corresponding MMOT problem with cost $c_h$. 
\begin{definition}[$h$-Wasserstein barycenter -- multi marginal formulation]\label{def:hwassbaryMM}
	Let $\overline\gamma$ be a solution of the problem \eqref{eq:MMhbary}. Then its push-forward under the $h$-barycenter map, 
	\begin{equation*}
		\overline{\nu}:=(\overline{x}_h)_{\sharp}\overline{\gamma},
	\end{equation*}
	is called $h$-Wasserstein barycenter of $\mu_1,\dots,\mu_n$ with weights $\lambda_1,\dots,\lambda_N$. 
\end{definition}

In analogy with the definition given by Agueh and Carlier in \cite{AC11} for the case $h=|\cdot|^2$,  we also give the following definition.
\begin{definition}[$h$-Wasserstein barycenter -- coupled two-marginal formulation]\label{def:hwassbary}
	For any probability measures $\mu_1,\dots,\mu_N$ such that $\mu_i\in\Prob(X_i)$, $i=1,\dots,N$, 
	the $h$-Wasserstein barycenter is the optimizer of the variational problem
	\begin{equation}\label{C2Mhbary}\tag{C2M-$h$-bar}
		C_{h\text{-C2M}}:=\inf_{\rho\in\Prob(\RRd)}\sum_{i=1}^{N}W_h(\mu_i, \rho),
	\end{equation}
	where 
	\begin{equation*}
		W_h(\mu_i,\rho):=\min_{\eta\in\Pi(\rho,\mu_i)}\int_{\RR^{2d}}h(x_i-z)d\eta.
	\end{equation*}
\end{definition}

The use of the name \textit{$h$-Wasserstein barycenter} both in Definition \ref{def:hwassbaryMM} and Definition \ref{def:hwassbary} is justified by the following result.
\begin{proposition}\label{prop:equivalence}
	For any $\mu_1,\dots,\mu_N$ probability measures such that $\mu_i\in\Prob(X_i)$, $i=1,\dots,N$, 
	we have that 
	\begin{equation*}
		C_{h\text{-MM}}=C_{h\text{-C2M}}.
	\end{equation*}
	Moreover, $\overline{\nu}$ is a minimizer for the problem \eqref{C2Mhbary} if and only if $\overline\nu=(\overline{x}_h)_\sharp\overline\gamma$, for some minimizer $\overline\gamma$ of the problem \eqref{eq:MMhbary}.
\end{proposition}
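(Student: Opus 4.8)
The plan is to establish $C_{h\text{-MM}}=C_{h\text{-C2M}}$ by proving the two inequalities separately, and then to read off the characterization of optimizers from the cases of equality in those proofs. The only ingredients needed are the classical gluing lemma for probability measures sharing a marginal, the existence of optimal plans for \eqref{eq:MMhbary} and for each $W_h(\mu_i,\rho)$ (Proposition~\ref{prop:existenceMMOT}), and the elementary pointwise bound
\begin{equation}\label{eq:equivpointwise}
 c_h(x_1,\dots,x_N)\;\le\;\sum_{i=1}^{N}\lambda_i\,h(x_i-z)\qquad\text{for every }z\in\RRd,
\end{equation}
which is immediate from the definition \eqref{eq:ch} and which, by the strict convexity assumption \ref{A2} (so that the minimizer in \eqref{eq:hbary} is unique), holds with equality if and only if $z=\xh(x_1,\dots,x_N)$. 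Measurability of $\xh$, needed to form the push-forwards below, follows from its continuity.

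For the inequality $C_{h\text{-C2M}}\le C_{h\text{-MM}}$ I would take an optimal plan $\overline\gamma$ for \eqref{eq:MMhbary}, put $\overline\nu:=(\xh)_\sharp\overline\gamma$, and for each $i$ test $W_h(\mu_i,\overline\nu)$ against the competitor $\eta_i:=(\xh,\pi^i)_\sharp\overline\gamma\in\Pi(\overline\nu,\mu_i)$, giving $W_h(\mu_i,\overline\nu)\le\int h(x_i-\xh(\xvec))\,d\overline\gamma$. Multiplying by $\lambda_i$, summing over $i$, and recognizing the right-hand side through the second expression for $c_h$ in \eqref{eq:ch} yields $\sum_i\lambda_i W_h(\mu_i,\overline\nu)\le\int c_h\,d\overline\gamma=C_{h\text{-MM}}$; taking the infimum over $\rho\in\Prob(\RRd)$ gives the claimed inequality. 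Once the reverse inequality is known, the same chain forces $\sum_i\lambda_i W_h(\mu_i,\overline\nu)=C_{h\text{-C2M}}$, i.e.\ every $\overline\nu=(\xh)_\sharp\overline\gamma$ with $\overline\gamma$ optimal for \eqref{eq:MMhbary} is optimal for \eqref{C2Mhbary} (in particular, minimizers of \eqref{C2Mhbary} exist); this is the \emph{if} direction of the characterization.

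For the reverse inequality $C_{h\text{-MM}}\le C_{h\text{-C2M}}$ I would fix $\rho\in\Prob(\RRd)$, choose optimal $\eta_i\in\Pi(\rho,\mu_i)$, and glue: since the $\eta_i$ share the first marginal $\rho$, the gluing lemma produces $\Gamma\in\Prob(\RRd\times\RR^{Nd})$ with $\pi^0_\sharp\Gamma=\rho$ and $(\pi^0,\pi^i)_\sharp\Gamma=\eta_i$ for all $i$, so that $\gamma:=(\pi^1,\dots,\pi^N)_\sharp\Gamma\in\Pi(\mu_1,\dots,\mu_N)$. Applying \eqref{eq:equivpointwise} under the integral with the point $z$ carried by $\Gamma$,
\begin{equation*}
 C_{h\text{-MM}}\;\le\;\int c_h\,d\gamma\;=\;\int c_h(x_1,\dots,x_N)\,d\Gamma\;\le\;\sum_{i=1}^{N}\lambda_i\int h(x_i-z)\,d\eta_i\;=\;\sum_{i=1}^{N}\lambda_i W_h(\mu_i,\rho),
\end{equation*}
and taking the infimum over $\rho$ closes the equality of optimal values. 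For the \emph{only if} direction I would then take $\overline\nu$ optimal for \eqref{C2Mhbary} and run this construction with $\rho=\overline\nu$: every inequality in the last display becomes an equality. Equality on the far left means $\gamma$ is optimal for \eqref{eq:MMhbary}; equality in the application of \eqref{eq:equivpointwise} forces $c_h(x_1,\dots,x_N)=\sum_i\lambda_i h(x_i-z)$ for $\Gamma$-a.e.\ $(z,x_1,\dots,x_N)$, hence $z=\xh(x_1,\dots,x_N)$ for $\Gamma$-a.e.\ such point. Therefore $(\xh)_\sharp\gamma=\pi^0_\sharp\Gamma=\overline\nu$, which exhibits $\overline\nu$ as the push-forward under $\xh$ of the optimal plan $\gamma$.

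I do not expect a serious obstacle here: the argument is a gluing/disintegration computation combined with the convexity inequality \eqref{eq:equivpointwise}. The one step that genuinely uses the hypotheses is the final one — upgrading equality of two integrals to the $\Gamma$-almost-everywhere identity $z=\xh(x_1,\dots,x_N)$ — which rests on the uniqueness of the $h$-barycenter and hence on the strict convexity assumption \ref{A2}; without it the characterization of optimizers would fail.
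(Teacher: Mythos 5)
Your proposal is correct and follows essentially the same route as the paper: the first inequality via push-forwards $(\pi^i,\xh)_\sharp\overline\gamma$ of an optimal multi-marginal plan, the second via coupling the $N$ two-marginal optimizers through their common marginal $\rho$ and applying the pointwise bound $c_h(\xvec)\le\sum_i\lambda_i h(x_i-z)$, and the characterization of optimizers by tracing equality, with strict convexity giving uniqueness of the $h$-barycenter. The only cosmetic difference is that you invoke the gluing lemma abstractly where the paper writes out the specific product disintegration $d\gamma_1^{(z)}\cdots d\gamma_N^{(z)}\,d\rho(z)$; both yield a valid $\Gamma\in\Prob(\RR^{(N+1)d})$ with the required bivariate marginals.
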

The proof of Proposition~\ref{prop:equivalence}, which is analogous to \cite[Proposition 3]{CE10}, is presented in Section~\ref{sec:equivalence}.

\medskip

Under the additional assumptions 
\begin{enumerate}[label=(\subscript{A}{\arabic*}), resume*]
	\item\label{A4}\textit{regularity}: $h\in C^2(\RRd)$,
	\item\label{A5}\textit{non-degeneracy}: $D^2h(z)>0$ for every $z\in \RRd$,
\end{enumerate}
on the function $h$, our main result about the structure of optimal plans for \eqref{eq:MMhbary} is the following:  
\begin{theorem}\label{th:sparsityplan}
Let $\mu_1 \ll \LL^d$. Then there exists a unique optimal plan $\overline{\gamma}$ for the problem \eqref{eq:MMhbary} and for every $i=2,\dots,N$, there exists a measurable map $T_i:X_1\to X_i$ such that
\begin{equation}\label{eq:monge}
    \overline\gamma=(Id,T_2,\dots,T_N)_\sharp\mu_1.
\end{equation}
With the further assumption that all the $\mu_1,\dots,\mu_N \ll \LL^d$, there exist functions $\varphi_i: \mathrm{supp}\mu_i \to\RR$ such that 
\begin{equation}\label{eq:mapT}
    T_i(x_1)=\left(\mathrm{Id}-Dh^{-1}\circ\left(\lambda_i^{-1} D\varphi_i\right)\right)^{-1} \circ \left(\mathrm{Id}-Dh^{-1}\circ\left(\lambda_1^{-1} D\varphi_1\right)\right)(x_1),
\end{equation} for any $i=2,\dots,N$.
\end{theorem}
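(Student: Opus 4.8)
The plan is to reduce the multi-marginal statement to classical two-marginal optimal transport by means of the equivalence in Proposition~\ref{prop:equivalence}, the single substantial input being the absolute continuity of the $h$-Wasserstein barycenter.

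\textbf{Step 1: absolute continuity of $\overline\nu$ (the main point).} Let $\overline\gamma$ be an optimal plan for \eqref{eq:MMhbary} and put $S:=\spt\overline\gamma$; recall $S$ is $c_h$-cyclically monotone. Two elementary consequences of \ref{A4}--\ref{A5} are used repeatedly: by the implicit function theorem applied to the optimality relation $\sum_i\lambda_i Dh(x_i-z)=0$ (whose $z$-derivative is negative definite by \ref{A5}), the barycenter map $\xh$ is $C^1$; and for fixed $x_2,\dots,x_N$ the map $x_1\mapsto c_h(x_1,x_2,\dots,x_N)$ is $C^1$ with uniformly positive definite Hessian on the compact set $X_1\times\dots\times X_N$, while along $S$ one has the twist relations $D_{x_j}c_h(\xvec)=\lambda_j Dh(x_j-\xh(\xvec))$, the $\xh$-term dropping out by first-order optimality. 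The heart of the matter is a \emph{quantitative injectivity estimate} for $\xh$ on $S$. For $\xvec,\xvec'\in S$, comparing the cost of $\xvec$ with that of the configuration obtained by exchanging the first coordinate of $\xvec$ with that of $\xvec'$, the $c_h$-cyclical monotonicity of $S$ together with the strong convexity of $c_h$ in $x_1$ produces an inequality of the form $\langle M(z-z'),x_1-x_1'\rangle\ge 0$, where $z=\xh(\xvec)$, $z'=\xh(\xvec')$ and $M$ is a positive definite average of Hessians of $h$. Plain injectivity of $\xh|_S$ already follows from this and \ref{A2} alone: if $z=z'$ the exchange forces $z$ to be the barycenter of each exchanged configuration, so subtracting barycenter equations gives $x_j=x_j'$ for all $j$. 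For the quantitative version one estimates the quantity $\langle M(z-z'),x_1-x_1'\rangle$ from below by $c'|x_1-x_1'|^2$ --- using the strong convexity of $c_h$ in $x_1$ together with the fact, also guaranteed by \ref{A5}, that perturbing $x_1$ moves the barycenter by a commensurate amount --- and from above by $C|z-z'|\,|x_1-x_1'|$; combining, $|z-z'|\ge (c'/C)\,|x_1-x_1'|$ for all $\xvec,\xvec'\in S$. I expect this estimate --- its sharp formulation and the accompanying linear algebra --- to be the main obstacle; it takes over the role of the explicit manipulation of the optimality system in the quadratic case \cite{GS98}. Granting it, observe that along $S$ the barycenter is a function of $x_1$ alone: for $\mu_1$-a.e.\ $x_1$ the first Kantorovich potential $u_1$ for \eqref{eq:MMhbary} is differentiable at $x_1$ (it is locally Lipschitz, being an infimum of functions with uniformly bounded $x_1$-gradients, and $\mu_1\ll\LL^d$), and the twist relation then gives $\xh(\xvec)=x_1-Dh^{-1}(\lambda_1^{-1}Du_1(x_1))=:G_1(x_1)$ for $\overline\gamma$-a.e.\ such $\xvec$. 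Hence $\overline\nu=(\xh)_\sharp\overline\gamma=(G_1)_\sharp\mu_1$, and since the estimate makes $G_1$ injective with Lipschitz inverse on a set of full $\mu_1$-measure while Lipschitz maps send $\LL^d$-null sets to $\LL^d$-null sets, $\mu_1\ll\LL^d$ forces $\overline\nu\ll\LL^d$.

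\textbf{Step 2: Monge form and uniqueness.} Granting $\overline\nu\ll\LL^d$, let $\overline\gamma$ be any optimal plan and set $\nu:=(\xh)_\sharp\overline\gamma$. By Proposition~\ref{prop:equivalence}, $\nu$ minimizes \eqref{C2Mhbary}; writing $\int c_h\,\dd\overline\gamma=\sum_i\lambda_i\int h(x_i-\xh(\xvec))\,\dd\overline\gamma=\sum_i\lambda_i\int h(x_i-z)\,\dd((\xh,\pi^i)_\sharp\overline\gamma)$ and using that this total equals $C_{h\text{-MM}}=C_{h\text{-C2M}}$, which in turn equals $\sum_i\lambda_i$ times the optimal two-marginal transport cost between $\nu$ and $\mu_i$, each summand must be individually minimal, i.e.\ each $(\xh,\pi^i)_\sharp\overline\gamma\in\Pi(\nu,\mu_i)$ is an optimal coupling of $\nu$ and $\mu_i$ for the cost $h$. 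Since $Dh$ is injective by \ref{A2}, this cost is twisted, so Gangbo--McCann's theorem \cite{GMC96} applies: as $\nu\ll\LL^d$ there is a map $T^i\colon\spt\nu\to X_i$ with $(\xh,\pi^i)_\sharp\overline\gamma=(\mathrm{Id},T^i)_\sharp\nu$, i.e.\ $x_i=T^i(\xh(\xvec))$ for $\overline\gamma$-a.e.\ $\xvec$. Combining this with $\xh(\xvec)=G_1(x_1)$ from Step~1 yields $x_i=T^i(G_1(x_1))$ for every $i$, hence $\overline\gamma=(\mathrm{Id},T_2,\dots,T_N)_\sharp\mu_1$ with $T_i:=T^i\circ G_1$; this is \eqref{eq:monge}, and in particular shows $\xh|_S$ is injective. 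For uniqueness: if $\overline\gamma,\overline\gamma'$ are both optimal then so is $\tfrac12(\overline\gamma+\overline\gamma')$, which by the above is concentrated on a graph over $X_1$; since $\overline\gamma,\overline\gamma'\le 2\cdot\tfrac12(\overline\gamma+\overline\gamma')$, both are concentrated on that same graph, and sharing the first marginal $\mu_1$ they coincide.

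\textbf{Step 3: the explicit formula.} Assume finally that all $\mu_i\ll\LL^d$. Applying \cite{GMC96} to the $i$-th optimal coupling also from the $\mu_i$-side gives a map $G_i\colon\spt\mu_i\to\spt\nu$ with $(G_i)_\sharp\mu_i=\nu$, inverse to $T^i$ on the relevant full-measure sets; the twist relation on the support reads $D\varphi_i(x_i)=\lambda_i Dh(x_i-G_i(x_i))$ for the Kantorovich potentials $\varphi_i$ of the $i$-th two-marginal problem, so $G_i=\mathrm{Id}-Dh^{-1}\circ(\lambda_i^{-1}D\varphi_i)$. In particular $G_1$ of Step~1 is this map for $i=1$, and $T^i=G_i^{-1}$, whence $T_i=T^i\circ G_1=G_i^{-1}\circ G_1$, which is precisely \eqref{eq:mapT}.
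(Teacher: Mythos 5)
Your overall architecture matches the paper's exactly: a quantitative injectivity estimate for $\xh$ on $\spt\overline\gamma$ coming from $c_h$-monotonicity, absolute continuity of $\overline\nu$ by a Lipschitz-inverse/null-set argument, reduction to two-marginal Gangbo--McCann, and the convex-combination trick for uniqueness. Steps 2 and 3 are essentially correct and close to the paper (up to the harmless cosmetic choice of working with two-marginal rather than $c_h$-conjugate multi-marginal potentials in Step 3, which gives the same form of $T_i$).

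The gap --- which you flag yourself as ``the main obstacle'' but then simply grant --- is the quantitative estimate in Step 1, and as sketched your route to it does not close. You propose to exchange only the \emph{first} coordinate between $\xvec,\xvec'\in\spt\overline\gamma$ and derive $\langle M(z-z'),x_1-x_1'\rangle\gtrsim |x_1-x_1'|^2$. But when you linearize the $c_h$-monotone inequality and the relation $\xh(\yvec)-\xh(\ytildevec)=\int_0^1\sum_j D_{x_j}\xh(\yvec(t))(y_j-\tilde y_j)\,dt$ around $\xvec$, the error coming from the non-constancy of $D^2_{x_1x_j}c_h$ and $D_{x_j}\xh$ is of order
\begin{equation*}
\eps\,|y_1-\tilde y_1|\sum_{j\ge 2}|y_j-\tilde y_j|\ \lesssim\ \eps\,|y_1-\tilde y_1|\,|\yvec-\ytildevec|,
\end{equation*}
while your main term is only $\Lambda_1|y_1-\tilde y_1|^2$. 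When $|y_1-\tilde y_1|\ll|\yvec-\ytildevec|$ --- which is not excluded at this stage of the argument, since the Monge structure is precisely what you are trying to prove --- the error dominates the main term, and no bound of the form $|z-z'|\ge L\,|x_1-x_1'|$ follows. In particular, ``strong convexity of $c_h$ in $x_1$'' alone cannot absorb this error, because the error scales with the full configuration separation, not with the first coordinate.

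The paper's resolution (Lemma~\ref{lem:firstinequality}, Proposition~\ref{prop:lowerbounddistancebary}, Corollary~\ref{cor:lowerbounddistancebary}) is to run your exchange argument for \emph{every} coordinate $i=1,\dots,N$ and \emph{sum} the resulting inequalities. The main term then becomes $\sum_i\Lambda_i|y_i-\tilde y_i|^2\ge\Lambda_m|\yvec-\ytildevec|^2$, which is now of the same order $|\yvec-\ytildevec|^2$ as the error, so that for $\eps$ small the error can be absorbed, yielding the local bound $|\xh(\yvec)-\xh(\ytildevec)|\ge L(\xvec)|\yvec-\ytildevec|$ and, by compactness of $\spt\overline\gamma$, the global version \eqref{eq:inverselipschitz}. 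The bound $|z-z'|\ge L|x_1-x_1'|$ that you need is then a trivial consequence of the stronger full-vector bound, not something one can obtain directly from a single-coordinate exchange. Incorporating this summation step would make your Step~1 complete; everything downstream is sound.
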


\begin{remark}
	The \emph{Kantorovich potentials} $\varphi_1,\dots,\varphi_N$ in Theorem~\ref{th:sparsityplan} are $c_h$-conjugate functions, see Definition~\ref{def:conjugate} below.
\end{remark}

\subsection{Proof strategy} 
Consider an optimal $\overline{\gamma}$ for \eqref{eq:MMhbary}. 
Then by Proposition~\ref{prop:optimalplanarecmonotone} its support is a $c_h$-monotone set.
The key ingredient in establishing sparsity of the support of $\overline{\gamma}$ is to show that if $\mu_1 \ll \LL^d$, then the barycenter $\overline{\nu}$ is also absolutely continuous with respect to Lebesgue measure. This is the content of Theorem~\ref{th:absolutecontinuity}, which relies on optimality of $\overline{\gamma}$ in the somewhat weaker form of $c_h$-monotonicity of $\spt\overline{\gamma}$. In fact, we prove a stronger statement: the injectivity of the map $\xh$ from $N$ points in $\spt\overline{\gamma}$ to their barycenter can be quantified in a local inverse Lipschitz estimate, see Proposition~\ref{prop:lowerbounddistancebary} and Corollary~\ref{cor:lowerbounddistancebary}. 
Moreover, the barycenter $\overline{\nu}$ has compact support, being the push-forward of a measure with compact support by a continuous map.

The rest of the proof appeals to classical results in the two-marginal theory, observing that the couplings $\gamma_i:=(\pi_i,\xh)_\sharp\overline\gamma$ are optimal for $W_h(\mu_i,\overline\nu)$, $i=1, \dots, N$, see Corollary~\ref{cor:optimalitytwomarginals}. Hence the celebrated result of Gangbo and McCann \cite{GMC96} ensures that there exist unique maps $f_i$ such that $\mu_i=(f_i)_\sharp\overline\nu$ and $\gamma_i=(f_i,id)_\sharp\overline\nu$ for every $i={1,\dots,N}$. This implies that 
\begin{equation*}
    \overline\gamma=(f_1,\dots,f_N)_\sharp\overline\nu.
\end{equation*}
Now, as $\mu_1 \ll \LL^d$, by the same result of Gangbo--McCann, we know that there exists an optimal map $g_1$ such that $\gamma_1=(id,g_1)_\sharp\mu_1$, and $g_1$ is the a.e.-inverse of $f_1$. Thus
\begin{equation*}
    \overline\gamma=(f_1,\dots,f_N)_\sharp(g_1)_\sharp\mu_1=(id,f_2\circ g_1, \dots, f_N\circ g_1)_\sharp\mu_1.
\end{equation*}
Once the Monge structure of the optimal transport plan is established, the almost-everywhere uniqueness of the maps $T_i = f_i \circ g_1$ then follows from a standard argument: the convex combination of two different optimal plans is still optimal, but cannot be Monge, which contradicts what we just showed.

If we assume in addition that $\mu_1,\dots,\mu_N$ are all absolutely continuous w.r.t.\ $\LL^d$, 
we can appeal to the first-order optimality system for the ($c_h$-conjugate) Kantorovich potentials $\varphi_1, \dots, \varphi_N$, 
\begin{align}\label{eq:optimalitysystem}
	\begin{split}
      \lambda_1 Dh(x_1-\xh(\xvec))&=D\varphi_1(x_1),\\
      &\vdots\\
      \lambda_N Dh(x_N-\xh(\xvec))&=D\varphi_N(x_N),\\
	\end{split}
\end{align}
see Proposition~\ref{prop:firstordoptsystem}.
In particular, if $\xvec\in\spt\overline\gamma$, then 
\begin{equation}\label{eq:baryfunctionx1}
    \xh(\xvec)=x_i-Dh^{-1}\left(\lambda_i^{-1} D\varphi_i(x_i)\right)=:g_i(x_i),
\end{equation}
for every $i=1,\dots,N$, where $g_i$ is the optimal transport map (in the two-marginal sense) from $\mu_i$ to $\overline{\nu}$. With the same argument as before, we know that it is invertible for every $i=1,\dots,N$ (with an a.e.-inverse that we called $f_i$ above). Then, for every $i=2,\dots,N$, 
\begin{align*}
    x_i=g_i^{-1}(\xh(\xvec))=g_i^{-1}(g_1(x_1)),
\end{align*}
which proves \eqref{eq:mapT}. \hfill $\blacksquare$

\medskip
The rest of the article is structured as follows: 
In Section~\ref{sec:preliminaries} we recall basic properties of multi-marginal optimal transport. The equivalence of the coupled two-marginal formulation and MMOT formulation of $h$-barycenters is presented in Section~\ref{sec:equivalence}. In Section~\ref{sec:main} we give the detailed proof of the absolute continuity of the $W_h$-barycenter, which is the main ingredient in the proof of Theorem~\ref{th:sparsityplan}.
\section{Preliminaries}\label{sec:preliminaries}
Let $X_1,\dots, X_N$ such that $X_i:=\RR^{d_i}$ for every $i=1,\dots,N$ and consider $\mu_1,\dots,\mu_N$ such that $\mu_i\in\Prob(X_i)$. \\Let $c:X_1\times\dots\times X_N\to\RR$ be a lower semicontinuous function, which is bounded from below. Then the multi-marginal optimal transport problem associated to the cost $c$ is  
 \begin{equation}\label{eq:MMOT}\tag{MMOT}
     \min_{\gamma\in\Pi(\mu_1,\dots,\mu_N)}\int_{\RR^{Nd}} c(\xdots)\,d\gamma(\xdots),
 \end{equation}
where
 \begin{equation*}
     \Pi(\mu_1,\dots,\mu_N):=\{\gamma\in\Prob(\RR^{Nd}) \, : \, \pi^i_\sharp \gamma = \mu_i \},
 \end{equation*}
 is the set of transport plans.
 
\begin{proposition}\label{prop:existenceMMOT}
The problem \eqref{eq:MMOT} admits a solution.
\end{proposition}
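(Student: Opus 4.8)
The plan is to prove Proposition~\ref{prop:existenceMMOT} by the direct method of the calculus of variations, exactly as in the two-marginal case. First I would note that the admissible set $\Pi(\mu_1,\dots,\mu_N)$ is nonempty, since it contains the product measure $\mu_1\otimes\dots\otimes\mu_N$, and that the infimum $I:=\inf_{\gamma\in\Pi(\mu_1,\dots,\mu_N)}\int c\,d\gamma$ is a real number because $c$ is bounded from below. Then I would fix a minimizing sequence $(\gamma_n)_{n\in\NN}\subset\Pi(\mu_1,\dots,\mu_N)$, i.e.\ $\int c\,d\gamma_n\to I$.

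The second step is to extract a convergent subsequence via Prokhorov's theorem, for which I need tightness of $(\gamma_n)$. Since each $X_i=\RR^{d_i}$ is a complete separable metric space, every $\mu_i$ is tight: given $\eps>0$, choose compact sets $K_i\subset X_i$ with $\mu_i(X_i\setminus K_i)<\eps/N$. Then $K:=K_1\times\dots\times K_N$ is compact and, using $\pi^i_\sharp\gamma_n=\mu_i$,
\[
\gamma_n\bigl((X_1\times\dots\times X_N)\setminus K\bigr)\le\sum_{i=1}^N\gamma_n\bigl(\{\,(\xdots):x_i\notin K_i\,\}\bigr)=\sum_{i=1}^N\mu_i(X_i\setminus K_i)<\eps
\]
uniformly in $n$, so $(\gamma_n)$ is tight. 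By Prokhorov's theorem there is a subsequence (not relabelled) with $\gamma_n\weaklystar\gamma$ for some $\gamma\in\Prob(X_1\times\dots\times X_N)$.

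It remains to check that $\gamma$ is admissible and optimal. Admissibility follows because each coordinate projection $\pi^i$ is continuous, hence $\pi^i_\sharp$ is continuous for narrow convergence, so $\mu_i=\pi^i_\sharp\gamma_n\weaklystar\pi^i_\sharp\gamma$ forces $\pi^i_\sharp\gamma=\mu_i$. For optimality I would use that, $c$ being lower semicontinuous and bounded below, the functional $\gamma\mapsto\int c\,d\gamma$ is lower semicontinuous with respect to narrow convergence: writing $c$ as an increasing pointwise limit of bounded continuous functions $c_m$, one gets $\int c\,d\gamma=\sup_m\int c_m\,d\gamma\le\sup_m\liminf_n\int c_m\,d\gamma_n\le\liminf_n\int c\,d\gamma_n=I$. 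Since $\gamma\in\Pi(\mu_1,\dots,\mu_N)$, this shows $\int c\,d\gamma=I$, i.e.\ $\gamma$ is a minimizer. I do not expect any genuine obstacle here; the only points requiring a little care are the narrow lower semicontinuity of the cost functional for a merely lower semicontinuous integrand (handled by monotone approximation from below) and the automatic tightness of Borel probability measures on $\RR^{d_i}$ (Ulam's theorem), both of which are standard.
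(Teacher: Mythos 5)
Your proof is correct and is precisely the standard direct-method argument (tightness via marginals and Prokhorov, stability of the marginal constraints under narrow convergence, and narrow lower semicontinuity of $\int c\,d\gamma$ for lower semicontinuous bounded-below $c$) that the paper merely cites from \cite{F24} without spelling out. One cosmetic remark: boundedness from below only guarantees $I>-\infty$; if $I=+\infty$ every admissible plan is trivially a minimizer, so the interesting case $I<\infty$ is the one your argument handles.
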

The proof proceeds via standard compactness and lower semicontinuity arguments, see e.g. \cite{F24}.
\begin{definition}\label{def: c-monotonicity}
Let $c:X_1\times\dots\times X_N\to\RR$ be any continuous cost function. Then we say that a set $\Gamma\subset X_1\times\dots\times X_N$ is $c$-monotone if for every $\xvec^1=(x^1_1,\dots,x^1_N)$, $\xvec^2=(x^2_1,\dots,x^2_N)\in\Gamma$ we have that 
\begin{equation*}
    c(x^1_1,\dots,x^1_N)+c(x^2_1,\dots,x^2_N)\le c(x^{\sigma_1(1)}_1,\dots,x^{\sigma_N(1)}_N)+c(x^{\sigma_1(2)}_1,\dots,x^{\sigma_N(2)}_N),
\end{equation*}
where $\sigma_i\in S(2)$, with $S(2)$ the set of permutations of two elements. 
\end{definition}
\begin{remark}
Definition \ref{def: c-monotonicity} is the standard generalization to multi-marginal OT of the known notion of $c$-monotonicity for $2$-marginal OT. We recall that also the notion of   $c$-cyclical monotonicity can be defined  in the  multi-marginal optimal setting (see \cite[Definition 2.2]{KP14}) and that $c$-cyclical monotonicity implies $c$-monotonicity.
\end{remark}
The following is a straightforward consequence of the fact that the support of optimal plans is $c$-cyclically monotone (see Proposition 2.3 in \cite{KP14}).
\begin{proposition}\label{prop:optimalplanarecmonotone}
If $c:X_1\times\dots\times X_N\to\RR$ is continuous and $\gamma$ is optimal for the problem \eqref{eq:MMOT}, then $\spt\gamma$ is $c$-monotone.
\end{proposition}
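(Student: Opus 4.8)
The plan is to argue by contradiction via the classical mass-swapping perturbation, turning any failure of $c$-monotonicity of $\spt\gamma$ into a strictly cheaper admissible competitor. (Alternatively one may simply quote that $\spt\gamma$ is $c$-cyclically monotone, hence $c$-monotone, cf.\ \cite{KP14}; what follows is the self-contained route.)

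Suppose $\spt\gamma$ is not $c$-monotone: there are $\xvec^1=(x^1_1,\dots,x^1_N)$, $\xvec^2=(x^2_1,\dots,x^2_N)\in\spt\gamma$ and permutations $\sigma_1,\dots,\sigma_N\in S(2)$ with $c(\xvec^1)+c(\xvec^2)>c(\mathbf w^1)+c(\mathbf w^2)$, where $w^k_i:=x^{\sigma_i(k)}_i$. Introduce the continuous ``rewiring'' maps $R^k:\RR^{Nd}\times\RR^{Nd}\to\RR^{Nd}$ ($k=1,2$) whose $i$-th block sends $(\yvec,\zvec)$ to $y_i$ if $\sigma_i(k)=1$ and to $z_i$ if $\sigma_i(k)=2$, so that $R^k(\xvec^1,\xvec^2)=\mathbf w^k$. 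We may assume $\xvec^1\ne\xvec^2$, since otherwise $R^k(\xvec^1,\xvec^2)=\xvec^1$ and the strict inequality reads $2c(\xvec^1)>2c(\xvec^1)$.

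First I would localize. As $c,R^1,R^2$ are continuous and $\xvec^1\ne\xvec^2$, choose $\eps>0$ and disjoint bounded open sets $U_1\ni\xvec^1$, $U_2\ni\xvec^2$ with
\[
 c\bigl(R^1(\yvec,\zvec)\bigr)+c\bigl(R^2(\yvec,\zvec)\bigr)\le c(\yvec)+c(\zvec)-\eps
 \qquad\text{for all }\yvec\in U_1,\ \zvec\in U_2,
\]
and set $m_1:=\gamma(U_1)>0$, $m_2:=\gamma(U_2)>0$ (positive because $\xvec^1,\xvec^2\in\spt\gamma$). For $t\in(0,\min(m_1,m_2)]$, put $\theta_1:=\tfrac{t}{m_1}\gamma|_{U_1}$ and $\theta_2:=\tfrac{t}{m_2}\gamma|_{U_2}$ (nonnegative, each of mass $t$, with $\theta_j\le\gamma$), $\lambda:=\tfrac1t\,\theta_1\otimes\theta_2$ (nonnegative, of mass $t$, supported in $U_1\times U_2$, with marginals $\theta_1$ and $\theta_2$), and define the competitor
\[
 \gamma_t:=\gamma-\theta_1-\theta_2+(R^1)_\sharp\lambda+(R^2)_\sharp\lambda.
\]
Since $U_1\cap U_2=\emptyset$ one has $\theta_1+\theta_2\le\gamma$, so $\gamma_t\ge0$, and $\gamma_t$ has total mass $1$. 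The step to verify is $\gamma_t\in\Pi(\mu_1,\dots,\mu_N)$: for each $i$, whether $\sigma_i$ is the identity or the transposition, the $i$-th marginals of $(R^1)_\sharp\lambda$ and $(R^2)_\sharp\lambda$ are $(\theta_1)_i$ and $(\theta_2)_i$ in one order or the other, hence sum to $(\theta_1)_i+(\theta_2)_i$, so $\pi^i_\sharp\gamma_t=\mu_i-(\theta_1)_i-(\theta_2)_i+(\theta_1)_i+(\theta_2)_i=\mu_i$.

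Then, using the marginals of $\lambda$ together with the localization inequality,
\[
 \int c\,d\gamma_t-\int c\,d\gamma
 =\int\Bigl[c\bigl(R^1(\yvec,\zvec)\bigr)+c\bigl(R^2(\yvec,\zvec)\bigr)-c(\yvec)-c(\zvec)\Bigr]d\lambda(\yvec,\zvec)\le-\eps\,t<0
\]
(all integrals being finite: $c$ is continuous, $U_1,U_2$ are bounded, $\theta_j\le\gamma$, and the optimal cost is finite, e.g.\ automatically when the $\mu_i$ are compactly supported, as in the applications), contradicting optimality of $\gamma$; hence $\spt\gamma$ must be $c$-monotone. I expect the only genuinely nontrivial point to be the marginal bookkeeping for $\gamma_t$ — confirming that the two coordinate-wise rewired configurations together restore, in each marginal, exactly the mass that was removed; the remainder is continuity plus a routine perturbation estimate.
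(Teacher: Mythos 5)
Your proof is correct, but it takes a genuinely different route from the paper. The paper dispatches the proposition in one line: it cites \cite[Proposition 2.3]{KP14} for the fact that the support of an optimal plan is $c$-cyclically monotone (in the multi-marginal sense), and then invokes the remark preceding the proposition that $c$-cyclical monotonicity implies $c$-monotonicity. You instead give a self-contained mass-swapping perturbation argument aimed directly at the weaker $c$-monotonicity property: localize around the two offending configurations, remove two slabs $\theta_1,\theta_2$ of equal mass, form the product coupling $\lambda$, and reinsert the two coordinate-wise rewired push-forwards $(R^1)_\sharp\lambda+(R^2)_\sharp\lambda$; the bookkeeping showing $\pi^i_\sharp\gamma_t=\mu_i$ is exactly right, since for each $i$ the pair $\{\sigma_i(1),\sigma_i(2)\}=\{1,2\}$ ensures the two rewired pieces restore $(\theta_1)_i+(\theta_2)_i$. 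The trade-off is clear: the paper's approach is shorter but leans on the heavier external result about full $c$-cyclical monotonicity; yours is longer but elementary, establishes precisely what is needed and no more, and makes the construction of the cheaper competitor explicit. One small care point you handle correctly but that deserves emphasis: you need $\int c\,d\gamma<\infty$ for the difference $\int c\,d\gamma_t-\int c\,d\gamma$ to be meaningful, which is automatic in the paper's compact setting; and you should take $U_1,U_2$ with compact closure so that the rewired pieces have bounded (hence $c$-integrable) support, which your ``bounded open sets'' phrasing already ensures.
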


\begin{proposition}\label{prop:goodinequality}
Let $c:X_1\times\dots\times X_N\to\RR$ be $C^2$ and $\Gamma\subset X_1\times\dots\times X_N$ be a $c$-monotone set. Then, for any set  $S\subseteq\{1,\dots,N\}$, s.t. $S\neq\emptyset,\{1,\dots,N\}$, we have that for any $\yvec=(\ydots)$, $\ytildevec=(\ytildedots)\in\Gamma$
\begin{equation}\label{eq:goodinequality}
    \int_0^1\int_0^1 \sum_{i \in S, j\not\in S}(y_i-\tilde{y}_i)^\intercal D^2_{x_ix_j}c(\yvec(s,t))(y_j-\tilde{y}_j)dtds\le 0,
\end{equation}
where
\begin{equation*}
    y_i(s,t)=\begin{cases} \tilde{y_i}+t(y_i-\tilde{y}_i) \quad \text{if} \ i\not\in S, \\ 
    \tilde{y_i}+s(y_i-\tilde{y}_i) \quad \text{if} \ i\in S. \end{cases}
\end{equation*}

\end{proposition}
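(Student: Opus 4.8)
The plan is to deduce \eqref{eq:goodinequality} from a single four‑point instance of $c$‑monotonicity, bridged by two applications of the fundamental theorem of calculus and one application of the chain rule. First I would specialize Definition~\ref{def: c-monotonicity} to the pair $\xvec^1=\yvec$, $\xvec^2=\ytildevec$ with the choice $\sigma_i=\mathrm{id}$ for $i\notin S$ and $\sigma_i$ equal to the transposition of $\{1,2\}$ for $i\in S$. With $\yvec(s,t)$ as defined in the statement, the four corners of the unit square are $\yvec(0,0)=\ytildevec$ and $\yvec(1,1)=\yvec$, while $\yvec(1,0)$ (coordinate $y_i$ for $i\in S$, $\tilde y_i$ for $i\notin S$) and $\yvec(0,1)$ (coordinate $\tilde y_i$ for $i\in S$, $y_i$ for $i\notin S$) are exactly the two ``remixed'' configurations appearing on the right‑hand side of the $c$‑monotonicity inequality for this choice of permutations. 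Hence that inequality reads
\begin{equation*}
    c(\yvec(1,1))-c(\yvec(1,0))-c(\yvec(0,1))+c(\yvec(0,0))\le 0.
\end{equation*}

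Next I would set $g(s,t):=c(\yvec(s,t))$. Since $c\in C^2$ and $(s,t)\mapsto\yvec(s,t)$ is affine, $g\in C^2([0,1]^2)$, so two applications of the fundamental theorem of calculus (differentiating under the integral sign is legitimate by smoothness) give
\begin{equation*}
    c(\yvec(1,1))-c(\yvec(1,0))-c(\yvec(0,1))+c(\yvec(0,0))=\int_0^1\int_0^1\partial_s\partial_t g(s,t)\,dt\,ds.
\end{equation*}
It then remains to identify $\partial_s\partial_t g$ with the integrand of \eqref{eq:goodinequality}. Because $y_i(s,t)$ depends only on $t$ when $i\notin S$ and only on $s$ when $i\in S$, the chain rule yields $\partial_t g=\sum_{j\notin S}D_{x_j}c(\yvec(s,t))\cdot(y_j-\tilde y_j)$ (the terms with $j\in S$ dropping out), and differentiating once more in $s$ (now only the terms with $i\in S$ survive),
\begin{equation*}
    \partial_s\partial_t g=\sum_{i\in S,\,j\notin S}(y_i-\tilde y_i)^\intercal D^2_{x_ix_j}c(\yvec(s,t))(y_j-\tilde y_j),
\end{equation*}
which is precisely the integrand. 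Combining the three displays proves the claim; Fubini justifies writing the double integral in the order $dt\,ds$.

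The argument is elementary, and there is no genuine obstacle. The only points requiring attention are the bookkeeping in the first step — matching the corners of the unit square to the correct remixed points, so that the appropriate case of Definition~\ref{def: c-monotonicity} is invoked — and in the chain‑rule step — tracking which of $\partial_s$, $\partial_t$ annihilates which coordinate block, so that exactly the mixed $S$‑versus‑non‑$S$ second‑derivative terms remain. Smoothness of $g$ and the interchange of the order of integration are routine.
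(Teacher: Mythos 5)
Your proof is correct and follows essentially the same route as the paper: you invoke the same single instance of $c$-monotonicity (swapping exactly the coordinates in $S$) and then convert the resulting ``second difference'' of $c$ into the double integral of the mixed second derivative via two applications of the fundamental theorem of calculus. Your reformulation in terms of $g(s,t)=c(\yvec(s,t))$ and the identity
\begin{equation*}
c(\yvec(1,1))-c(\yvec(1,0))-c(\yvec(0,1))+c(\yvec(0,0))=\int_0^1\!\!\int_0^1\partial_s\partial_t g\,dt\,ds
\end{equation*}
is a tidy way to package what the paper writes out step by step, but it is the same argument, not a genuinely different one.
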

\begin{proof}
Let us take $\sigma_1,\dots\sigma_N\in S(2)$, such that 
\begin{equation*}
  \sigma_i(y_i)=
    \begin{cases}
  y_i, \quad \text{if} \ i\not\in S,\\
    \tilde{y}_i, \quad \text{if} \ i\in S,
    \end{cases}
    \ \text{and} \quad \sigma_i(\tilde{y}_i)=
       \begin{cases}
 \tilde{y}_i, \quad \text{if} \ i\not\in S,\\
    y_i, \quad \text{if} \ i\in S,
    \end{cases}
\end{equation*}
and define  $\zvec=(\zdots)=(\sigma_1(y_1),\dots,\sigma_N(y_N))$ and $\tilde{\zvec}=(\tilde{z}_1,\dots,\tilde{z}_N)=(\sigma_1(\tilde{y}_1),\dots,\sigma_N(\tilde{y}_N))$. 
Since $\Gamma$ is $c$-monotone, we have that 
\begin{equation}\label{eq:meanvalue1}
    c(\ydots)-c(\zdots)- (c(\tilde{z}_1,\dots,\tilde{z}_N)-c(\ytildedots))\le 0.
\end{equation}
The result then follows by applying twice the fundamental theorem of calculus. Indeed first by \eqref{eq:meanvalue1} we get 
\begin{equation}\label{eq:meanvalue2}
    \int_0^1 \sum _{i\in S}D_{x_i} c(\yvec(t))(y_i-\tilde{y}_i)dt-\int_0^1 \sum _{i\in S}D_{x_i} c(\ytildevec(t))(y_i-\tilde{y}_i)dt\le 0,
\end{equation}
where 
\begin{align*}
\begin{split}
    y_i(t)=\begin{cases} y_i \quad &\text{if} \ i\not\in S, \\ 
    \tilde{y_i}+t(y_i-\tilde{y}_i) \quad &\text{if} \ i\in S, \end{cases}	
\end{split}
\qquad\text{and}
\begin{split}
	\tilde{y}_i(t)=\begin{cases} \tilde{y}_i \quad &\text{if} \ i\not\in S, \\ 
    \tilde{y_i}+t(y_i-\tilde{y}_i) \quad &\text{if} \ i\in S. \end{cases}
\end{split}
\end{align*}
By applying the fundamental theorem again in \eqref{eq:meanvalue2}, we obtain \eqref{eq:goodinequality}.
\end{proof}
\begin{definition}[$c$-conjugate functions \cite{F24}]\label{def:conjugate}
Let $c:X_1\times\dots\times X_N\to\RR$ be a continuous cost function and $\varphi_i:X_i\to\RR\cup\{-\infty\}$ be a proper function for every $i=1,\dots,N$. 
\begin{enumerate}
    \item For every $i=1,\dots,N$ the $c,i$-transform of $(\varphi_1,\dots,\varphi_{i-1},\varphi_{i+1},\dots,\varphi_N)$ is the function defined by
    \begin{equation*}
        (\varphi_j)_{j\neq i}^{c}(x_i):=\inf_{\substack{\hat{\xvec}_i=(x_1,\dots,x_{i-1},x_{i+1},\dots,x_N)\\\in X_1\times\dots\times X_N}}\left\{ c(\xdots)-\sum_{j\neq i}\varphi_j(x_j)  \right\};
    \end{equation*}
    \item The functions $\varphi_1,\dots,\varphi_N$ are called $c$-conjugate if 
    \begin{equation*}
        \varphi_i=(\varphi_j)_{j\neq i}^{c}, \quad \text{for every} \ i=1,\dots,N.
    \end{equation*}
\end{enumerate}
\end{definition}
It is a well known fact in OT that there exists a dual formulation of the problem, see \cite{RR98}. The version of the duality theorem we provide can be found in \cite{F24}. 
\begin{theorem}\label{prop:duality}
The problem \eqref{eq:MMOT} admits the following dual formulation
\begin{equation*}
    \eqref{eq:MMOT}=\sup_{\varphi_1,\dots,\varphi_N\in\mathcal{A}(c)}\sum_{i=1}^{N}\int_{\RRd}\varphi_id\mu_i,
\end{equation*}
where 
\[\mathcal{A}(c):=\{ (\varphi_1,\dots,\varphi_N)\in C_{0}(X_1)\times\cdots\times C_{0}(X_N) \, : \, \varphi_1\oplus\cdots\oplus\varphi_N\le c\}.  \]
If $X_1,\dots,X_N$ are compact subsets of $\RR^{d_1},\dots,\RR^{d_N}$, there exists a maximizer $\Phi=(\varphi_1,\dots,\varphi_N)\in\mathcal{A}(c)$ and the optimal $\varphi_1,\dots,\varphi_N$ are called Kantorvich potentials. Moreover we can choose the optimal $\Phi=(\varphi_1,\dots,\varphi_N)\in\mathcal{A}(c)$, such that all the $\varphi_i$s are $c$-conjugate functions.
\end{theorem}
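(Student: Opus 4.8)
The plan is to prove the duality equality by pairing the trivial ``easy'' inequality with a no-gap statement from Fenchel--Rockafellar duality, and then, in the compact case, to produce an optimal tuple by Arzel\`a--Ascoli and upgrade it to a $c$-conjugate one by a monotone sweep of $c$-transforms.

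For the easy direction: given $\gamma\in\Pi(\mu_1,\dots,\mu_N)$ and $(\varphi_1,\dots,\varphi_N)\in\mathcal{A}(c)$, integrating $\varphi_1\oplus\dots\oplus\varphi_N\le c$ against $\gamma$ and using $\pi^i_\sharp\gamma=\mu_i$ gives $\sum_{i=1}^N\int_{\RRd}\varphi_i\,d\mu_i=\int(\varphi_1\oplus\dots\oplus\varphi_N)\,d\gamma\le\int c\,d\gamma$, and taking the supremum on the left and the infimum on the right yields ``$\le$''. For the converse I would first treat compact $X_i$ (so $C_0(X_i)=C(X_i)$, and assuming $c$ continuous, hence bounded and uniformly continuous on the product) via Fenchel--Rockafellar on $E:=C(X_1\times\dots\times X_N)$. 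Set $\Theta_1(u):=-\sum_i\int\varphi_i\,d\mu_i$ when $u=\varphi_1\oplus\dots\oplus\varphi_N$ for some $\varphi_i\in C(X_i)$ — this decomposition being unique up to additive constants of zero sum, so $\Theta_1$ is well defined — and $\Theta_1(u):=+\infty$ otherwise; and $\Theta_2(u):=0$ if $u\le c$ pointwise and $+\infty$ otherwise. Both are convex, and $\inf_{u\in E}\bigl(\Theta_1(u)+\Theta_2(u)\bigr)=-\sup_{\mathcal{A}(c)}\sum_i\int\varphi_i\,d\mu_i$. Taking $u$ a sufficiently negative constant gives a point where $\Theta_1$ is finite and $\Theta_2$ is finite and continuous, so Fenchel--Rockafellar yields $\inf_{u}\bigl(\Theta_1(u)+\Theta_2(u)\bigr)=\max_{\ell\in E^*}\bigl(-\Theta_1^*(-\ell)-\Theta_2^*(\ell)\bigr)$. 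Identifying $E^*$ with the signed Radon measures on the product, one computes $\Theta_2^*(\ell)=\int c\,d\ell$ if $\ell\ge0$ and $+\infty$ otherwise, and $\Theta_1^*(-\ell)=0$ if $\pi^i_\sharp\ell=\mu_i$ for every $i$ and $+\infty$ otherwise, whence the right-hand side equals $-\min_{\gamma\in\Pi(\mu_1,\dots,\mu_N)}\int c\,d\gamma$ (the minimum attained by Proposition~\ref{prop:existenceMMOT}); comparing with the expression for the left-hand side gives the absence of a gap. The case $X_i=\RR^{d_i}$ then follows by approximating the l.s.c., bounded-below cost $c$ from below by an increasing sequence of bounded continuous functions and passing to the limit, as in \cite{RR98,F24}.

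For the compact case one also needs a maximizer. Take a maximizing sequence in $\mathcal{A}(c)$; after a standard normalization it may be assumed uniformly bounded, and replacing each tuple by the output of one sweep of the $c,i$-transforms of Definition~\ref{def:conjugate} does not decrease the dual value (the new $i$-th component dominates the old one pointwise, feasibility being preserved) and makes the sequence equicontinuous, since $c,i$-transforms of a uniformly bounded tuple inherit a modulus of continuity from $c$ and remain uniformly bounded. By Arzel\`a--Ascoli a subsequence converges uniformly to some $(\varphi_1^\infty,\dots,\varphi_N^\infty)\in\mathcal{A}(c)$ attaining the supremum. To reach $c$-conjugacy, start from any maximizer and perform one sweep $i=1,\dots,N$, at step $i$ replacing the $i$-th component by the $c,i$-transform of the current other components: each step preserves feasibility and the maximal value, and a short monotonicity check shows that once a component equals the $c,i$-transform of the others it stays equal to it under all later replacements — enlarging another component pointwise can only lower an infimum, while feasibility bounds that infimum below by the unchanged value. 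Hence after the full sweep every component is the $c,i$-transform of the others, i.e.\ the tuple is $c$-conjugate.

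The main obstacle is the no-gap step: choosing the Fenchel--Rockafellar functionals so the constraint qualification holds and the two Legendre transforms reproduce exactly the marginal constraints and the integral of the cost, and then discharging the compactness assumption by approximation. Given that, the existence of a maximizer is a routine Arzel\`a--Ascoli argument once the equicontinuity of $c$-transforms is noted, and the passage to $c$-conjugate potentials is the elementary monotonicity bookkeeping indicated above.
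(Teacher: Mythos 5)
The paper does not actually prove Theorem~\ref{prop:duality}: the text immediately preceding it defers entirely to the references \cite{RR98} and \cite{F24}. Your proposal supplies the standard Fenchel--Rockafellar argument that those references use. The choice of perturbation functionals $\Theta_1,\Theta_2$ on $C(X_1\times\cdots\times X_N)$, the well-posedness of $\Theta_1$ (the direct-sum decomposition is unique up to constants of zero sum, and the $\mu_i$ are probabilities, so the value is unambiguous), the constraint qualification at a sufficiently negative constant, and the identification of $\Theta_1^*$, $\Theta_2^*$ with the marginal constraints and the transport cost are all exactly the textbook route, and the computation of the conjugates is carried out correctly. The upgrade from a maximizer to a $c$-conjugate tuple via a single sweep of $c,i$-transforms is also correct; the monotonicity bookkeeping you indicate (enlarging a component only lowers the infimum defining the $c,i$-transform, while feasibility pins it from below) is the right check, and the sweep preserves both feasibility and the optimal value.

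One imprecision in the Arzel\`a--Ascoli step: normalizing by constants that sum to zero only fixes, say, $\sup\varphi_i=0$ for $i\ge 2$, which gives upper bounds, not the asserted ``uniform boundedness'' (there is no pointwise lower bound yet). You then invoke the $c$-transform sweep as if its job were merely to add equicontinuity to an already bounded family, but in fact it is the sweep that produces the two-sided bounds: with upper bounds and $\sup\varphi_j=0$ in hand, the $c,1$-transform lands in $[\min c,\max c]$, and each subsequent $c,i$-transform is then bounded by constants depending on $c$ and the previously obtained bounds, while the modulus of continuity comes from $c$ alone, independently of any boundedness. So the correct logical order is ``normalize to get upper bounds, sweep to get two-sided bounds and equicontinuity,'' rather than ``normalize to get uniform bounds, sweep to get equicontinuity.'' This is a wording issue that does not break the argument, but as written the reason offered for boundedness is circular. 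The passage from compact continuous $c$ to l.s.c.\ bounded-below $c$ by monotone approximation from below is the standard reduction and is fine as a sketch.
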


\begin{corollary}\label{cor:firstorderopt}
Let $\gamma\in\Pi(\mu_1,\dots,\mu_N)$ be optimal for \eqref{eq:MMOT} and $\varphi_1,\dots,\varphi_N$ be Kantorovich potentials. Then for every point $\mathbf{x}=(\xdots)\in\spt\gamma$ where $c$ is partially differentiable at $\xvec$ with respect to $x_i$ and $\varphi_i$ is differentiable at $x_i$, we have 
\[D_{x_i}c(\xdots)= D\varphi_i(x_i).\]
\end{corollary}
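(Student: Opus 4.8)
The plan is to derive the identity from the duality theorem together with a complementary-slackness argument, after which the claim reduces to a first-order condition for an interior minimum.

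First I would use Theorem~\ref{prop:duality}: since $\gamma$ is optimal and $\varphi_1,\dots,\varphi_N$ are Kantorovich potentials, the two optimal values coincide, i.e. $\int_{\RR^{Nd}} c\,d\gamma = \sum_{i=1}^N \int_{X_i}\varphi_i\,d\mu_i$. Because $\pi^i_\sharp\gamma=\mu_i$, the right-hand side equals $\int_{\RR^{Nd}}\bigl(\varphi_1(x_1)+\dots+\varphi_N(x_N)\bigr)\,d\gamma(\xvec)$, so that $\int_{\RR^{Nd}}\bigl(c(\xvec)-\sum_{i=1}^N\varphi_i(x_i)\bigr)\,d\gamma=0$. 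The admissibility constraint $\varphi_1\oplus\dots\oplus\varphi_N\le c$ makes the integrand nonnegative, hence it vanishes $\gamma$-almost everywhere. Since $c$ is continuous and the $\varphi_i$ may be taken continuous (Theorem~\ref{prop:duality}), the set $\{\xvec : c(\xvec)=\sum_{i=1}^N\varphi_i(x_i)\}$ is closed and of full $\gamma$-measure, and therefore contains $\spt\gamma$.

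Next I would fix $\xvec=(\xdots)\in\spt\gamma$ and an index $i$ such that $c$ is partially differentiable with respect to $x_i$ at $\xvec$ and $\varphi_i$ is differentiable at $x_i$, and consider the function $\psi:\RR^{d_i}\to\RR$, $\psi(y):=c(x_1,\dots,x_{i-1},y,x_{i+1},\dots,x_N)-\varphi_i(y)$. From the constraint, $\psi(y)\ge\sum_{j\neq i}\varphi_j(x_j)$ for every $y$, while the previous step gives $\psi(x_i)=\sum_{j\neq i}\varphi_j(x_j)$; thus $\psi$ attains a global minimum at the interior point $x_i$ of $X_i=\RR^{d_i}$. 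As $\psi$ is differentiable at $x_i$ by the two differentiability hypotheses, $D\psi(x_i)=0$, which is exactly $D_{x_i}c(\xdots)=D\varphi_i(x_i)$.

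There is no genuine difficulty here; the only step requiring a word of care is the upgrade from ``$\gamma$-almost every $\xvec$'' to ``every $\xvec\in\spt\gamma$'', which relies on the joint continuity of $c$ and of the chosen Kantorovich potentials so that the equality set is closed.
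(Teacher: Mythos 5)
Your proof is correct and takes essentially the same approach as the paper's, which simply notes that the nonnegative function $c-\varphi_1\oplus\cdots\oplus\varphi_N$ vanishes on $\spt\gamma$ and cites the first-order condition at a minimum. You have merely filled in the complementary-slackness and support-upgrade details that the paper leaves to the cited reference.
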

\begin{proof}
This is also a standard result (see for instance \cite{F24}) due to the fact that, if $\varphi_1,\dots,\varphi_N$ are the Kantorovich potentials, the nonnegative function $c-\varphi_1\oplus\cdots\oplus\varphi_N$ is equal $0$ on $\spt \gamma$ and therefore any $\xvec\in\spt\gamma$ is a minimum point of such a function.
\end{proof}
\section{Equivalence with coupled two-marginal formulation}\label{sec:equivalence}
\begin{proof}[Proof of Proposition~\ref{prop:equivalence}]
 We first show that $C_{h\text{-MM}}\ge C_{h\text{-C2M}}$. Let $\overline\gamma$ be optimal for \eqref{eq:MMhbary} and let $\overline\nu=(\overline{x}_h)_\sharp\overline\gamma$. For any $i$ we define $\gamma_i:=(\pi_i,\xh)_\sharp\overline\gamma$, where $\pi_i(\xdots)=x_i$, then $\gamma_i\in\Pi(\mu_i,\overline\nu)$ and
 \begin{align}
    \notag \int_{\RR^{Nd}}\lambda_ih(x_i-\overline{x}_h(\mathbf{x}))d\overline\gamma(\xvec)&=\int_{\RR^{Nd}}\lambda_ih\left(\pi_i(\mathbf{x})-\overline{x}_h(\mathbf{x})\right)d\overline\gamma(\xvec)\\
    \notag &=\int_{\RR^{2d}}\lambda_ih(x_i-z)d\gamma_i(x_i,z)\\
     \label{eq:MMgraterC2M}&\ge \lambda_iW_h(\overline\nu,\mu_i).
 \end{align}
 By summing over $i$ we obtain the desired inequality.\\
 Let us prove the converse inequality $C_{h\text{-MM}}\le C_{h\text{-C2M}}$. Let $\rho\in\Prob(\RRd)$ and let $\gamma_i$ be optimal for $\lambda_iW_h(\rho,\mu_i)$ and $\{\gamma_i^{(z)}\}_{z\in\RRd}$ be the disintegration of $\gamma_i$ with respect to the first marginal, i.e. $\gamma_i^{(z)}\in\Prob(\RRd)$ for every $z\in\RRd$ and 
\[d\gamma_i(x_i,z)=d\gamma_i^{(z)}(x_i)d\rho(z).\]
If $\gamma$ is such that  
\begin{equation}\label{eq:gammadisint}
    d\gamma(\xdots)=\int_{\RRd}d\gamma_1^{(z)}(x_1)\cdots d\gamma_N^{(z)}(x_N)d\rho(z),
\end{equation}
then
\begin{align}
\notag\sum_i \lambda_i W_h(\mu_i,\rho)&=\sum_i\lambda_i\int_{\RR^{2d}}h(x_i-z) d\gamma_i\\\notag &=\sum_i\lambda_i\int_{\RR^{2d}}h(x_i-z)d\gamma_i^{(z)}(x_i)d\rho(z)\\\notag&=\int_{\RR^{d(N+1)}}\sum_i\lambda_ih(x_i-z)d\gamma_1^{(z)}(x_1)\cdots d\gamma_N^{(z)}(x_N)d\rho(z)\\\label{eq:minimalitybary}&\ge \int_{\RR^{Nd}}\sum_i\lambda_ih(x_i-\overline{x}_h(\mathbf{x})) \int_{\RRd}d\gamma_1^{(z)}(x_1)\cdots d\gamma_N^{(z)}(x_N)d\rho(z)\\\label{eq:last}&\ge C_{h\text{-MM}}.
\end{align}
The claim follows by the arbitrariness of $\rho$. \\
We conclude by proving the equivalence of the minimizers. Let $\overline\gamma$ be optimal for \eqref{eq:MMhbary}. Being inequality in \eqref{eq:MMgraterC2M} in fact an equality, $\overline\nu=(\overline{x}_h)_\sharp \gamma$ has to be optimal for \eqref{C2Mhbary}. \\
Vice versa, let $\overline\nu$ be optimal for the problem \eqref{C2Mhbary} and define $\hat\gamma\in\Pi(\mu_1,\dots,\mu_N,\overline\nu)$ such that
\begin{equation*}
d\hat\gamma(\xdots,z):=d\gamma_1^{(z)}(x_i)\cdots d\gamma_N^{(z)}d\overline\nu(z).
\end{equation*}
Clearly, by \eqref{eq:gammadisint}, we have that $\gamma=(\pi_1,\dots, \pi_N)_{\sharp}\hat\gamma$. Since the inequalities \eqref{eq:minimalitybary} and \eqref{eq:last} are in fact equalities, $\gamma$ has to be optimal. 
But the inequality \eqref{eq:minimalitybary} is an equality if and only if $z=\overline{x}_h(\mathbf{x})$ on $\spt\gamma$, then we also have that $\hat\gamma=(\overline{x}_h,id)_\sharp\gamma$ and thus $\overline\nu=(\pi_0)_\sharp\hat\gamma=(\overline{x}_h)_\sharp\gamma$.
\end{proof}
\begin{remark}
Proposition \ref{prop:equivalence} ensures the existence of a minimizer for the problem \eqref{C2Mhbary}. Without appealing to the formulation as multi-marginal optimal transport problem, an existence result for problems of the form \eqref{C2Mhbary} can be found in \cite{CE10}.
\end{remark}
\begin{corollary}
\label{cor:optimalitytwomarginals}
Let $\overline\gamma$ be optimal for the problem \eqref{eq:MMhbary}. Then  $\gamma_i:=(\pi_i,\xh)_\sharp\overline\gamma\in \Pi(\mu_i,\overline\nu)$ and is optimal for $W_h(\mu_i,\overline\nu)$, where $\overline\nu:=(\xh)_\sharp\overline\gamma$ is the $h$-Wasserstein barycenter.
\end{corollary}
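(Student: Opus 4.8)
The claim is essentially a by-product of the proof of Proposition~\ref{prop:equivalence}, so the plan is to extract it from there rather than to argue from scratch. First I would check the membership $\gamma_i\in\Pi(\mu_i,\overline\nu)$: since $\pi^i_\sharp\overline\gamma=\mu_i$ and $\overline\nu=(\xh)_\sharp\overline\gamma$ by definition, the push-forward of $\overline\gamma$ under $(\pi_i,\xh)$ is a probability measure on $\RR^{2d}$ whose two marginals are precisely $\mu_i$ and $\overline\nu$.

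For optimality I would reuse the chain \eqref{eq:MMgraterC2M}. By the change-of-variables formula and the definition of $W_h(\mu_i,\overline\nu)$ as an infimum over $\Pi(\mu_i,\overline\nu)$,
\[
\int_{\RR^{Nd}}\lambda_i h\big(x_i-\xh(\mathbf{x})\big)\,d\overline\gamma=\int_{\RR^{2d}}\lambda_i h(x_i-z)\,d\gamma_i\ge\lambda_i W_h(\mu_i,\overline\nu)\qquad(i=1,\dots,N).
\]
Summing over $i$, using $c_h(\mathbf{x})=\sum_{i=1}^N\lambda_i h(x_i-\xh(\mathbf{x}))$ and the optimality of $\overline\gamma$ for \eqref{eq:MMhbary}, gives
\[
C_{h\text{-MM}}=\int c_h\,d\overline\gamma=\sum_{i=1}^N\int_{\RR^{2d}}\lambda_i h(x_i-z)\,d\gamma_i\ge\sum_{i=1}^N\lambda_i W_h(\mu_i,\overline\nu)\ge C_{h\text{-C2M}},
\]
where the last inequality holds because $\overline\nu$ is an admissible competitor $\rho$ in \eqref{C2Mhbary}.

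Now I would invoke Proposition~\ref{prop:equivalence}, which asserts $C_{h\text{-MM}}=C_{h\text{-C2M}}$; then the displayed chain collapses to a string of equalities, so in particular
\[
\sum_{i=1}^N\Big(\int_{\RR^{2d}}\lambda_i h(x_i-z)\,d\gamma_i-\lambda_i W_h(\mu_i,\overline\nu)\Big)=0 .
\]
Each summand is nonnegative by the termwise inequality above, hence every summand vanishes; and since $\lambda_i>0$ this forces $\int_{\RR^{2d}}h(x_i-z)\,d\gamma_i=W_h(\mu_i,\overline\nu)$, i.e. $\gamma_i$ attains the minimum defining $W_h(\mu_i,\overline\nu)$ and is therefore optimal. (As a by-product the chain also re-derives $\sum_i\lambda_i W_h(\mu_i,\overline\nu)=C_{h\text{-C2M}}$, consistent with the second assertion of Proposition~\ref{prop:equivalence}.) I do not foresee a genuine obstacle here: the only points warranting a little care are the use of the change-of-variables formula to identify $\int\lambda_i h(x_i-\xh(\mathbf{x}))\,d\overline\gamma$ with $\int\lambda_i h(x_i-z)\,d\gamma_i$, and the positivity $\lambda_i>0$, which is exactly what permits passing from the vanishing of a sum of nonnegative gaps to the vanishing of each individual gap.
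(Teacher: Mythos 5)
Your proof is correct and follows essentially the same route as the paper: verify the marginals, chain the termwise inequalities $\int\lambda_i h(x_i-z)\,d\gamma_i\ge\lambda_i W_h(\mu_i,\overline\nu)$, sum, and then use Proposition~\ref{prop:equivalence} to force equality everywhere, so each nonnegative gap must vanish. The only cosmetic difference is that you invoke the first assertion of Proposition~\ref{prop:equivalence} ($C_{h\text{-MM}}=C_{h\text{-C2M}}$ together with $\overline\nu$ being an admissible competitor), whereas the paper directly invokes the second assertion (that $\overline\nu$ is a minimizer of \eqref{C2Mhbary}); both close the chain in the same way.
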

\begin{proof}
The fact that $\gamma_i:=(\pi_i,\xh)_\sharp\overline\gamma\in \Pi(\mu_i,\overline\nu)$ is straightforward from the definition. Moreover
\begin{align*}
\sum_i \lambda_i W_h(\mu_i,\overline\nu)&\le\sum_i\int_{\RR^{2d}}\lambda_ih(x_i-z) d\gamma_i(x_i,z)\\&=\sum_i\int_{\RR^{Nd}}\lambda_ih(\pi_i(\xvec)-\xh(\xvec)) d\overline\gamma(\xvec)\\&=\sum_i \lambda_i W_h(\mu_i,\overline\nu),
\end{align*}
where the last equality is a consequence of Proposition  \ref{prop:equivalence}.
Since $\lambda_i W_h(\mu_i,\overline\nu)\le \lambda_i\int_{\RR^{2d}}h(x_i-z) d\gamma_i$ for every $i$, the equality above yields $\lambda_i W_h(\mu_i,\overline\nu)= \lambda_i\int_{\RR^{2d}}h(x_i-z) d\gamma_i$ and thus the optimality of $\gamma_i$.
\end{proof}
\section{Absolute continuity of the barycenter and proof of the main result}\label{sec:main}
Lemma \ref{lem:firstinequality} below already contains a central idea of our proof that optimal plans are of Monge form. As explained in the introduction, we seek to show that the following highly non-injective map, defined by \eqref{eq:hbary}, from $\RR^{Nd}$ to $\RR^d$,
\begin{equation} \label{eq:barymap}
    \xvec=(\xdots)\mapsto \xh(\xvec),
\end{equation}
is injective on the support of optimal plans, and has a locally Lipschitz inverse. This implies that the local dimension of the support is just $d$, not $N\!\cdot\! d$.
We will prove this by a monotonicity argument, which is easy to understand for the quadratic cost $h(x_i-z)=\tfrac12|x_i-z|^2$. In this case, it simply says that the barycenter map \eqref{eq:barymap} is strictly monotone in each $y_i$ with a quantitative lower bound, 
\begin{equation} \label{eq:firstineq2}
       (y_{i_0}-\tilde{y}_{i_0})^T\left(\xh(\yvec)-\xh(\ytildevec)\right)\ge \Lambda |y_{i_0}-\tilde y_{i_0}|^2
\end{equation}
for some $\Lambda>0$. We claim that this inequality directly follows from
\begin{enumerate}[label=(\roman*)]
    \item $c$-monotonicity of the support of optimal plans,
    \item the inequality \eqref{eq:goodinequality} for any two points in a $c$-monotone set.
\end{enumerate}
Indeed, in the quadratic case we have $\xh(\xvec)=\sum_{j=1}^N \lambda_j x_j$ and the mixed ($i\neq j$) second derivative matrix $D^2_{x_ix_j}c(\xvec)$ equals $-\lambda_i\lambda_j \mathrm{Id}$. In particular, it is constant, so that \eqref{eq:goodinequality} reduces to 
\begin{equation}\label{eq:goodinequality2}
     \sum_{i\in S,j\not\in S} (y_i-\ytilde_i)^T D^2_{x_ix_j} c(\xvec) (y_j-\ytilde_j) \le 0
\end{equation}
where $x\in\RR^{dN}$ is arbitrary. Consequently 
\begin{align*}
    (y_{i_0}-\tilde{y}_{i_0})^T\left(\xh(\yvec)-\xh(\ytildevec)\right) &= (y_{i_0}-\tilde{y}_{i_0})^T \sum_{j=1}^N \lambda_j(y_j-\ytildevec_j) \\
    &= |y_{i_0}-\tilde{y}_{i_0}|^2 \underbrace{- \frac{1}{\lambda_{i_0}} \sum_{ j\neq i_0} (y_{i_0}-\tilde{y}_{i_0})
    D^2_{x_{i_0}x_j}c(x) (y_j-\ytilde_j)}_{\ge 0, \text{ by } \eqref{eq:goodinequality2}}, 
\end{align*}
establishing \eqref{eq:firstineq2}. Our first lemma below locally generalizes this estimate to $C^2$ costs $h$ with locally positive Hessian.

Throughout this section we assume that $h:\RRd\to\RR$ satisfies assumptions \ref{A1}-\ref{A3} and, in addition,
\begin{enumerate}[label={(\subscript{A}{4}})]
    \item\textit{Regularity}: $h\in C^2(\RRd)$.
\end{enumerate}

\begin{remark}\label{rmk:diffbary}
	Under assumptions \ref{A1}-\ref{A4} on $h$, due to its optimality, the point $\xh(\xdots)$ is the only solution of 
	\begin{equation}\label{eq:firstorderoptimalitybary}
		\sum_{i=1}^{N}\lambda_i Dh(x_i-z)=0,
	\end{equation}
	for any given $\xvec=(\xdots)\in\RR^{Nd}$. 
	Let us define the function $F:\RR^{(N+1)d}\to\RRd$ by 
	\begin{equation*}
		F(\mathbf{x},z):=\sum_{i=1}^{N}\lambda_i Dh(x_i-z).
	\end{equation*}
	If $\xvec=(\xdots)\in\RR^{Nd}$ is such that $\det(D^2h(x_i-\xh(\xvec)))\neq0$ (and therefore $D^2h(x_i-\xh(\xvec))$ is a strictly positive matrix) for some $i\in\{1,\dots,N\}$, then $D_zF(\xvec,\xh(\xvec))$ is invertible and the Implicit Function Theorem implies that there exists an open neighborhood $U_{\xvec}$ of $\xvec$, such that $\xh$ satisfies \eqref{eq:firstorderoptimalitybary} on $U_{\xvec}$ and $\xh\in C^1(U_{\xvec})$. For any $i=1,\dots,N$, we then have that
	\begin{align}\label{eq:diffbary}
		\notag 
		D_{x_i} \xh (\ydots)=&-D_{z} F(\yvec, \xh(\mathbf{y}))^{-1}D_{x_i} F(\mathbf{y}, \xh(\mathbf{y}))\\
		&= \Bigl(\sum_{k=1}^N \lambda_kD^2h(y_k-\xh(\yvec))\Bigr)^{-1}\lambda_iD^2h(y_i-\xh(\yvec)),
	\end{align}
	for any $\yvec=(\ydots)\in U_{\xvec}$. 
	Moreover the function $c_h:\RR^{dN}\to\RR$, defined in \eqref{eq:ch}, belongs to $C^2(U_{\xvec})$. Indeed, for every $\yvec\in U_{\xvec}$, differentiating we get
	\begin{align}\label{eq:firstderivativecost}
		\notag D_{x_i}c_h(\ydots)&= \lambda_iDh(y_i-\xh(\yvec))-\left(\sum_{k=1}^{N}\lambda_kDh(y_k-\xh(\yvec))\right)D_{x_i}\xh(\mathbf{y})\\&= \lambda_iDh(y_i-\xh(\yvec)),
	\end{align}
	since $\sum_{k=1}^{N}\lambda_kDh(y_k-\xh(\yvec))=0$, by \eqref{eq:firstorderoptimalitybary}.
	Differentiating again, if $i\not = j$, one has
	\begin{align}\label{eq:mixedsecondderivative}
		\notag D^2_{x_ix_j}c_h(\ydots)&= -\lambda_iD^2h(y_i-\xh(\yvec))D_{y_j} \xh (\ydots)
		\\ &= -\lambda_i\lambda_jD^2h(y_i-\xh(\xvec))\left(\sum_{k=1}^N \lambda_kD^2h(y_k-\xh(\yvec))\right)^{-1}D^2h(y_j-\xh(\xvec))
	\end{align}
	and, if $i=j$
	\begin{align*}\label{eq:secondderivativecost}
		D^2_{x_i}c_h(\ydots)
		&=\lambda_iD^2h(y_i-\xh(\yvec)) \left(\1-\left(\sum_{k=1}^N \lambda_kD^2h(y_k-\xh(\yvec)) \right)^{-1}\lambda_iD^2h(y_i-\xh(\yvec)) \right).
	\end{align*}
\end{remark}

\begin{description}
 \item[Notation] In order to lighten the notation we fix 
 \begin{align*}
     &H(\xvec):=\sum_{k=1}^N \lambda_kD^2h(x_k-\xh(\xvec)),\\
     &M_i(\xvec):=D^2h(x_i-\xh(\xvec)), \quad \text{for any} \ i\in\{1,\dots,N\}.
 \end{align*}
\end{description}
 \begin{lemma}\label{lem:firstinequality}
 Let $\overline\gamma$ be $c_h$-monotone (see Defintion \ref{def: c-monotonicity}) and let $\xvec=(\xdots)\in\RR^{Nd}$, such that $\det D^2h(x_{i_0},\xh(\xvec))\neq 0$ (and thus $D^2h(x_{i_0}-\xh(\xvec))>0$ ), for some $i_0\in\{1,\dots,N\}$. Then for every $\eps>0$, there exists $r>0$ such that  for every $\yvec, \ytildevec\in\spt\overline\gamma\cap B(\xvec,r)$
\begin{equation}\label{eq:firstinequality}
    (y_{i_0}-\tilde{y}_{i_0})^T\left(D^2h(x_{i_0}-\xh(\xvec))\right) \left(\xh(\yvec)-\xh(\ytildevec)\right)\ge \Lambda_{i_0} |y_{i_0}-\tilde y_{i_0}|^2-\eps N(1+|M_{i_0}(\xvec)|)|\yvec-\ytildevec|^2,
\end{equation}
where $\Lambda_{i_0}>0$ is the minimum eigenvalue of the matrix  $D^2h(x_{i_0}-\xh(\xvec)H(\xvec)^{-1}D^2h(x_{i_0}-\xh(\xvec))$. 
 \end{lemma}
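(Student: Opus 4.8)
The plan is to differentiate the barycenter map near $\xvec$ (where it is smooth), rewrite its first derivatives via the mixed second derivatives of $c_h$, and then kill all the off-diagonal interactions by feeding them into the $c_h$-monotonicity inequality \eqref{eq:goodinequality} with the single choice $S=\{i_0\}$. The diagonal ($j=i_0$) piece will carry the positive definiteness and produce the term $\Lambda_{i_0}|y_{i_0}-\tilde{y}_{i_0}|^2$, while the passage from values at points near $\xvec$ to values at $\xvec$ will produce precisely the $\eps$-error on the right of \eqref{eq:firstinequality}.

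First I would localize. Since $\det M_{i_0}(\xvec)\neq 0$ and $D^2h\ge 0$ by convexity, $M_{i_0}(\xvec)>0$; hence $H(\xvec)\ge\lambda_{i_0}M_{i_0}(\xvec)>0$, so $M_{i_0}(\xvec)H(\xvec)^{-1}M_{i_0}(\xvec)>0$ and in particular $\Lambda_{i_0}>0$. By Remark~\ref{rmk:diffbary} there is an open neighbourhood $U_{\xvec}$ of $\xvec$ on which $\xh\in C^1$, $c_h\in C^2$, and the formulas \eqref{eq:diffbary} and \eqref{eq:mixedsecondderivative} hold. Fix a closed ball $\overline{B}(\xvec,r_0)\subset U_{\xvec}$; on it the continuous matrix fields $\mathbf{z}\mapsto D_{x_j}\xh(\mathbf{z})=H(\mathbf{z})^{-1}\lambda_jM_j(\mathbf{z})$, $\mathbf{z}\mapsto M_j(\mathbf{z})$, $\mathbf{z}\mapsto H(\mathbf{z})^{-1}$ and $\mathbf{z}\mapsto D^2_{x_{i_0}x_j}c_h(\mathbf{z})$ are uniformly continuous and bounded. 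The radius $r<r_0/\sqrt2$ will be chosen only at the end; the factor $\sqrt2$ guarantees that whenever $\yvec,\ytildevec\in B(\xvec,r)$ the whole path $\yvec(s,t)$ of Proposition~\ref{prop:goodinequality}, as well as the two swapped configurations occurring in its proof with $S=\{i_0\}$ (admissible since $N\ge2$), stay inside $\overline{B}(\xvec,r_0)\subset U_{\xvec}$, so that Proposition~\ref{prop:goodinequality} may legitimately be applied to $c_h$ along that path even though $c_h$ is only $C^2$ locally.

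Next I would decompose $\xh(\yvec)-\xh(\ytildevec)$ by the fundamental theorem of calculus along that path, moving the components $j\neq i_0$ first (parameter $t$) and then the $i_0$-component (parameter $s$), using $D_{x_j}\xh(\mathbf{z})=H(\mathbf{z})^{-1}\lambda_jM_j(\mathbf{z})$:
\begin{equation*}
   \xh(\yvec)-\xh(\ytildevec)=\int_0^1 D_{x_{i_0}}\xh(\yvec(s,1))(y_{i_0}-\tilde{y}_{i_0})\,ds+\int_0^1\sum_{j\neq i_0}D_{x_j}\xh(\yvec(0,t))(y_j-\tilde{y}_j)\,dt.
\end{equation*}
Left-multiplying by $(y_{i_0}-\tilde{y}_{i_0})^TM_{i_0}(\xvec)$ splits the quantity to be estimated into a diagonal and an off-diagonal term. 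In the diagonal term, replacing $H(\yvec(s,1))^{-1}$ and $M_{i_0}(\yvec(s,1))$ by their values at $\xvec$ (uniform continuity, hence an error $\le\eps\,|M_{i_0}(\xvec)|\,|y_{i_0}-\tilde{y}_{i_0}|^2$ once $r$ is small) turns it into $\lambda_{i_0}(y_{i_0}-\tilde{y}_{i_0})^TM_{i_0}(\xvec)H(\xvec)^{-1}M_{i_0}(\xvec)(y_{i_0}-\tilde{y}_{i_0})$, which is $\ge\Lambda_{i_0}|y_{i_0}-\tilde{y}_{i_0}|^2$ by the choice of $\Lambda_{i_0}$. In the off-diagonal term, formula \eqref{eq:mixedsecondderivative} in the form $\lambda_{i_0}M_{i_0}(\mathbf{z})D_{x_j}\xh(\mathbf{z})=-D^2_{x_{i_0}x_j}c_h(\mathbf{z})$ ($j\neq i_0$) lets me replace $M_{i_0}(\xvec)D_{x_j}\xh(\yvec(0,t))$ by $-\lambda_{i_0}^{-1}D^2_{x_{i_0}x_j}c_h(\yvec(0,t))$ up to the variation of $M_{i_0}$ on the ball; then, inserting a harmless $\int_0^1(\cdot)\,ds$ and using uniform continuity of $D^2_{x_{i_0}x_j}c_h$ to replace $\yvec(0,t)$ by $\yvec(s,t)$, the off-diagonal term becomes
\begin{equation*}
   -\frac{1}{\lambda_{i_0}}\int_0^1\int_0^1\sum_{j\neq i_0}(y_{i_0}-\tilde{y}_{i_0})^TD^2_{x_{i_0}x_j}c_h(\yvec(s,t))(y_j-\tilde{y}_j)\,dt\,ds
\end{equation*}
up to an error which, using $|y_{i_0}-\tilde{y}_{i_0}|\,|y_j-\tilde{y}_j|\le|\yvec-\ytildevec|^2$ together with the bounds on the matrix fields, is $\le\eps\,|\yvec-\ytildevec|^2$ times a constant depending only on $N$, $h$ and the values at $\xvec$ of $M_j(\xvec)$ and $H(\xvec)^{-1}$. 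Since $\spt\overline\gamma$ is $c_h$-monotone and the path $\yvec(s,t)$ lies in $U_{\xvec}$, Proposition~\ref{prop:goodinequality} with $S=\{i_0\}$ shows that this double integral is $\le 0$, so the off-diagonal term is bounded below by minus that error. Adding the two bounds, and if necessary shrinking $r$ once more so that all the finitely many error prefactors are dominated by $N(1+|M_{i_0}(\xvec)|)$, we obtain \eqref{eq:firstinequality}.

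The only genuine idea here is the pairing of \eqref{eq:mixedsecondderivative} with the $S=\{i_0\}$ instance of the $c_h$-monotonicity inequality: this is exactly what makes the cross interactions between marginal $i_0$ and the remaining marginals collapse to a nonnegative quantity. Everything else is the accounting of first-order errors, and the main point of care is precisely that accounting — keeping the path $\yvec(s,t)$ and the swapped configurations inside the neighbourhood where $c_h$ is $C^2$ (whence the requirement $r<r_0/\sqrt2$), and checking that every error term is genuinely quadratic in $|\yvec-\ytildevec|$, with a prefactor that is finite, depends only on $\xvec$, $N$, $h$, and can be made $\le\eps\,N(1+|M_{i_0}(\xvec)|)$ by shrinking $r$.
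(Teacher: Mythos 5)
Your proposal is correct and follows essentially the same strategy as the paper: decompose $\xh(\yvec)-\xh(\ytildevec)$ by the fundamental theorem of calculus, left-multiply by $(y_{i_0}-\tilde y_{i_0})^TM_{i_0}(\xvec)$, use positive definiteness of $M_{i_0}(\xvec)H(\xvec)^{-1}M_{i_0}(\xvec)$ for the diagonal term, pair the identity $\lambda_{i_0}M_{i_0}D_{x_j}\xh=-D^2_{x_{i_0}x_j}c_h$ from \eqref{eq:mixedsecondderivative} with Proposition~\ref{prop:goodinequality} (taking $S=\{i_0\}$) to render the off-diagonal contribution nonnegative up to error, and absorb the uniform-continuity errors into the $\eps$-term. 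The only cosmetic differences are that you integrate along the two-stage path of Proposition~\ref{prop:goodinequality} rather than the straight segment used in the paper, which costs you one extra approximation (inserting $\int_0^1 ds$), and that you are somewhat more careful than the paper about keeping the path and the swapped configurations inside the neighbourhood where $c_h$ is $C^2$ (the $\sqrt2$ factor).
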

In the quadratic case, the matrix $D^2h$ is the identity and the left hand side of \eqref{eq:firstinequality} reduces to that of \eqref{eq:firstineq2}. Moreover as will become clear in the proof, the additional term on the right as compared to \eqref{eq:firstineq2} comes from the fact that in the non-quadratic case the matrix $D^2_{x_ix_j}c_h$ is no longer constant. 

\begin{proof}
 For the sake of simplicity, we assume that $i_0=1$. We call   $A_{1j}:=D^2_{x_1x_j}c_h(\xdots)$ and $B_j:= D_{x_j}\xh(\xdots)$.
 By Remark \ref{rmk:diffbary} we know that $A_{1j}, B_j$ are well defined and that for every $\eps>0$, there exists $r>0$ such that
 \begin{align}
     \label{eq:continuitysecondderivativec}&|D^2_{x_1x_j}c_h(\yvec)-A_{1j}|<\eps, \quad \text{and} \\
     \label{eq:continuityfirstderivativebary}&|D_{x_j}\xh(\yvec)-B_j|<\eps,
 \end{align}
for every $j\in\{1,\dots,N\}$ and for every $\yvec=(\ydots)\in B(\xvec,r)$.\\
So we fix $\eps>0$ and we take $\yvec=(\ydots),\ytildevec=(\ytildedots)\in \spt\overline\gamma\cap B(\xvec,r)$. Since $\overline\gamma$ is $c_h$-monotone, thanks to Proposition~\ref{prop:goodinequality}, by considering $S=\{1\}$ in \eqref{eq:goodinequality},  we get 
\begin{equation*}
    \int_0^1\int_0^1 \sum_{j=2}^N(y_1-\tilde{y}_1)^T D^2_{x_1x_j}c_h(\yvec(s,t))(y_j-\tilde{y}_j)dtds\le 0,
\end{equation*}
which implies, thanks to \eqref{eq:continuitysecondderivativec},
\begin{equation}\label{eq:goodinequalityintheproof}
    \sum_{j=2}^N(y_1-\tilde{y}_1)^T A_{1j}(y_j-\tilde{y}_j)\le\eps |y_1-\tilde{y}_1|\sum_{j=2}^N|y_j-\tilde{y}_j|.
\end{equation}
Let $\yvec(t)$ defined such that $y_j(t)=\tilde{y}_j+t(y_j-\tilde y_j)$, for every $j$, for every $t\in[0,1]$. If we multiply both side of the equality
\begin{equation*}
    \xh(\yvec)-\xh(\ytildevec)=\int_0^1\sum_{j=1}^ND_{x_j}\xh(\yvec(t))(y_j-\tilde y_j)dt,
\end{equation*}
by $(y_1-\tilde{y}_1)^T D^2h(x_1-\xh(\xvec))$, 
we get 
\begin{align}
  \notag&(y_1-\tilde{y}_1)^T(D^2h(x_1-\xh(\xvec)) \left(\xh(\yvec)-\xh(\ytildevec)\right)\\ 
\label{eq:goodinequalityintheproof2} &= \int_0^1\sum_{j=1}^N(y_1-\tilde{y}_1)^T (D^2h(x_1-\xh(\xvec))D_{x_j}\xh(\yvec(t))(y_j-\tilde y_j)dt\\
   \notag &\overset{\eqref{eq:continuityfirstderivativebary}}{\ge}\sum_{j=1}^N(y_1-\tilde{y}_1)^T \left(D^2h(x_1-\xh(\xvec))
 \right)B_j(y_j-\tilde y_j)-\eps|y_1-\tilde{y}_1||D^2h(x_1,\xh(\xvec))|\sum_{j=2}^N|y_j-\tilde{y}_j|.
\end{align}
We observe that, when $j=1$, we have
\begin{equation*}
   D^2h(x_1-\xh(\xvec))B_1 \stackrel{\eqref{eq:diffbary}}{=}D^2h(x_1-\xh(\xvec))H(\xvec)^{-1}D^2h(x_1-\xh(\xvec))>0,
\end{equation*}
since $H(\xvec)>0$, being the sum of a strictly positive definite matrix ($\lambda_1D^2h(x_1-\xh(\xvec))$) and some nonnegative definite matrices ($\lambda_kD^2h(x_k-\xh(\xvec))$, $k\ge 2$) . Then 
\begin{equation*}
  (y_1-\tilde{y}_1)^T D^2h(x_1-\xh(\xvec))B_1(y_1-\tilde y_1)\ge \Lambda_1 |y_1-\tilde{y}_1|^2,
\end{equation*}
where $\Lambda_1>0$ is the smallest eigenvalue of $M_1^T H^{-1} M_1$.\\
Now we notice that, when $j\ge2$, 
\begin{equation*}
D^2h(x_1-\xh(\xvec))B_j\stackrel{\eqref{eq:mixedsecondderivative}}{=}D^2_{x_1x_j}c(\xdots)=-A_{1j},
\end{equation*}
and then
\begin{align*}
   \sum_{j=2}^N(y_1-\tilde{y}_1)^T D^2h(x_1-\xh(\xvec))B_j(y_j-\tilde y_j)&=-\sum_{j=2}^N(y_1-\tilde{y}_1)^T A_{1j}(y_j-\tilde y_j)\\&\overset{\eqref{eq:goodinequalityintheproof}}{\ge} -\eps |y_1-\tilde{y}_1|\sum_{j=2}^N|y_j-\tilde{y}_j|\\&\ge - \eps N |\yvec-\ytildevec|^2.
\end{align*}
Finally 
\begin{equation*}
    -\eps|y_1-\tilde{y}_1||D^2h(x_1-\xh(\xvec))|\sum_{j=2}^N|y_j-\tilde{y}_j|\ge -\eps N |M_1(\xvec)||\yvec-\ytildevec|^2.
\end{equation*}
Putting all together in \eqref{eq:goodinequalityintheproof2} we obtain
\begin{equation*}
    (y_1-\tilde{y}_1)^T\left(D^2h(x_i-\xh(\xvec))\right) \left(\xh(\yvec)-\xh(\ytildevec)\right)\ge \Lambda_1 |y_1-\tilde y_1|^2-\eps N(1+M_1(\xvec))|\yvec-\ytildevec|^2.
\end{equation*}
 \end{proof}
 \begin{proposition}\label{prop:lowerbounddistancebary}
 Let $\xvec=(\xdots)\in\RR^{Nd}$ such that $D^2h(x_1-\xh(\xvec))>0$, for every $i\in\{1,\dots,N\}$. Let $\overline\gamma$ be optimal for the problem \eqref{eq:MMhbary}. Then, there exist $r(\xvec), L(\xvec)>0$ such that  
 \begin{equation}\label{eq:inverselipschitzlocal}
     |\xh(\yvec)-\xh(\ytildevec)|\ge L(\xvec) |\yvec-\ytildevec|,
 \end{equation}
 for every $\yvec=(\ydots),\ytildevec=(\ytildedots)\in \spt\gamma\cap B(\xvec,r(\xvec))$.
 \end{proposition}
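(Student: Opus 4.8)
The plan is to upgrade the one-coordinate estimate of Lemma~\ref{lem:firstinequality} into a bound on the full displacement $|\yvec - \ytildevec|$ by applying it at \emph{every} index $i$ for which $D^2 h(x_i - \xh(\xvec)) > 0$, then summing. First I would fix $\xvec$ with $D^2h(x_i - \xh(\xvec)) > 0$ for all $i$, and choose $\eps > 0$ to be determined at the end. Lemma~\ref{lem:firstinequality} (applied with $i_0 = i$, which is legitimate since the non-degeneracy hypothesis holds at each $i$) gives an $r_i > 0$ and constants $\Lambda_i > 0$, $|M_i(\xvec)|$ such that for $\yvec, \ytildevec \in \spt\overline\gamma \cap B(\xvec, r_i)$,
\begin{equation*}
 (y_i - \tilde y_i)^T M_i(\xvec)\bigl(\xh(\yvec) - \xh(\ytildevec)\bigr) \ge \Lambda_i |y_i - \tilde y_i|^2 - \eps N(1 + |M_i(\xvec)|)|\yvec - \ytildevec|^2.
\end{equation*}
Setting $r := \min_i r_i$ and summing over $i = 1, \dots, N$, the left-hand side is bounded above by
$\bigl(\sum_i |M_i(\xvec)||y_i - \tilde y_i|\bigr)|\xh(\yvec) - \xh(\ytildevec)| \le C(\xvec)\,|\yvec - \ytildevec|\,|\xh(\yvec) - \xh(\ytildevec)|$
for a constant $C(\xvec)$ depending only on $\max_i |M_i(\xvec)|$, by Cauchy--Schwarz. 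On the right-hand side, $\sum_i \Lambda_i|y_i - \tilde y_i|^2 \ge (\min_i \Lambda_i)|\yvec - \ytildevec|^2$, and the error terms sum to $\eps N\bigl(\sum_i(1 + |M_i(\xvec)|)\bigr)|\yvec - \ytildevec|^2$.

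Combining, we obtain
\begin{equation*}
 C(\xvec)\,|\yvec - \ytildevec|\,|\xh(\yvec) - \xh(\ytildevec)| \ge \Bigl(\min_i \Lambda_i - \eps N \sum_i (1 + |M_i(\xvec)|)\Bigr)|\yvec - \ytildevec|^2.
\end{equation*}
Now I would choose $\eps$ small enough (depending only on $\xvec$, via $\min_i \Lambda_i$ and the fixed quantities $|M_i(\xvec)|$) that the bracket on the right is at least $\tfrac12 \min_i \Lambda_i > 0$; the radius $r = r(\xvec)$ is then fixed by this choice of $\eps$ through Lemma~\ref{lem:firstinequality}. Dividing by $|\yvec - \ytildevec|$ (the inequality being trivial when $\yvec = \ytildevec$) yields \eqref{eq:inverselipschitzlocal} with $L(\xvec) = \min_i \Lambda_i / (2 C(\xvec))$.

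The only subtlety I anticipate is bookkeeping around the constants: one must make sure $\eps$ is chosen \emph{after} all the index-dependent quantities $\Lambda_i$, $|M_i(\xvec)|$ are fixed (they depend only on $\xvec$ through the Hessians $D^2h$ evaluated at $x_i - \xh(\xvec)$, not on $\yvec, \ytildevec$ or on $\eps$), and only \emph{then} invoke Lemma~\ref{lem:firstinequality} to extract the radius $r(\xvec)$. Once the order of quantifiers is respected, no genuine obstacle remains: the argument is a direct summation of the already-established one-coordinate monotonicity estimates, and the strict positivity $\min_i \Lambda_i > 0$ — which is where the non-degeneracy assumption on $D^2h$ enters — guarantees the constant $L(\xvec)$ is positive. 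Note that optimality of $\overline\gamma$ (rather than mere $c_h$-monotonicity of its support) is used here only insofar as Lemma~\ref{lem:firstinequality} already invoked $c_h$-monotonicity of $\spt\overline\gamma$, which Proposition~\ref{prop:optimalplanarecmonotone} supplies from optimality.
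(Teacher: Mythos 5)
Your proposal is correct and follows essentially the same route as the paper: apply Lemma~\ref{lem:firstinequality} at every index $i$, sum the resulting estimates, bound the left-hand side by $C(\xvec)\,|\yvec-\ytildevec|\,|\xh(\yvec)-\xh(\ytildevec)|$ via Cauchy--Schwarz and operator norms, and then choose $\eps$ small enough (after which $r(\xvec)$ is fixed through the lemma) so that the coefficient of $|\yvec-\ytildevec|^2$ on the right stays strictly positive. Your remark on the quantifier order --- that $\eps$ is chosen using only the $\xvec$-dependent quantities $\Lambda_i$, $|M_i(\xvec)|$, and only then does the lemma supply $r(\xvec)$ --- matches the paper's bookkeeping exactly.
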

 \begin{proof}
 
By applying Lemma \ref{lem:firstinequality} for every $i=1,\dots,N$ and summing \eqref{eq:firstinequality} over $i$, one gets that for every $\eps>0$ there exists $r>0$ such that
\begin{equation*}
    \sum_{i=1}^N (y_i-\tilde{y}_i)^T D^2h(x_i-\xh(\xvec)) \left(\xh(\yvec)-\xh(\ytildevec)\right)\ge \sum_{i=1}^N \Lambda_i |y_i-\tilde y_1|^2-\eps N\sum_{i=1}^N(1+|M_i(\xvec)|)|\yvec-\ytildevec |^2,
\end{equation*}
for every $\yvec,\ytildevec\in B(\xvec,r)$, and therefore 
\begin{equation*}
NM(\xvec)|\yvec-\ytildevec||\xh(\yvec)-\xh(\ytildevec)|\ge \left(\Lambda_m-\eps N^2(1+M(\xvec))\right)|\yvec-\ytildevec|^2,
\end{equation*}
where $M(\xvec):=\max_i \{|M_i(\xvec)|\}$ and $\Lambda_m:=\min_i \{\Lambda_i\}$.
Then, by choosing $\eps>0$ such that $\Lambda_m-\eps N^2(1+M(\xvec))>0$ and a suitable $r(\xvec)>0$, we get \eqref{eq:inverselipschitzlocal}.
 \end{proof}
\begin{description}\item[Assumptions]
  From now on, we also have to assume that $h:\RRd\to\RR$ satisfies
 \begin{enumerate}[label={(\subscript{A}{5}})]
 \item $D^2h(z)>0$ for every $z\in \RRd$.
 \end{enumerate}
in addition to assumptions \ref{A1}-\ref{A4}.
\end{description}

\begin{corollary}\label{cor:lowerbounddistancebary}
Let $\overline\gamma$ be $c_h$-monotone. Then there exists a constant $L>0$ such that for every $\yvec=(\ydots),\ytildevec=(\ytildedots)\in \spt\overline\gamma$ such that 
 \begin{equation}\label{eq:inverselipschitz}
     |\xh(\yvec)-\xh(\ytildevec)|\ge L |\yvec-\ytildevec|. 
 \end{equation}
 \end{corollary}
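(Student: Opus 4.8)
The plan is to upgrade the \emph{local} estimate of Proposition~\ref{prop:lowerbounddistancebary} to the \emph{global} one by combining it with two facts that use only compactness and $c_h$-monotonicity: that $\spt\overline\gamma$ is compact, and that the barycenter map $\xh$ is injective on $\spt\overline\gamma$. First, since $\overline\gamma\in\Pi(\mu_1,\dots,\mu_N)$ and each $\mu_i$ is supported in the compact set $X_i$, the set $\spt\overline\gamma$ is a closed subset of the compact set $X_1\times\cdots\times X_N$, hence compact. Because $D^2h>0$ on all of $\RRd$ by \ref{A5}, the hypothesis of Proposition~\ref{prop:lowerbounddistancebary} is met at \emph{every} $\xvec\in\spt\overline\gamma$ (and its proof only invokes $c_h$-monotonicity), so we obtain radii $r(\xvec)>0$ and constants $L(\xvec)>0$ as there. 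Extracting from the open cover $\{B(\xvec,r(\xvec)/2)\}_{\xvec\in\spt\overline\gamma}$ a finite subcover $B(\xvec_1,r_1/2),\dots,B(\xvec_K,r_K/2)$ and setting $\delta:=\tfrac12\min_k r_k$ and $L_0:=\min_k L(\xvec_k)$, a short triangle-inequality argument shows that $|\xh(\yvec)-\xh(\ytildevec)|\ge L_0\,|\yvec-\ytildevec|$ for all $\yvec,\ytildevec\in\spt\overline\gamma$ with $|\yvec-\ytildevec|<\delta$.

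The heart of the matter is injectivity of $\xh$ on $\spt\overline\gamma$, which I would prove as follows. Let $\yvec,\ytildevec\in\spt\overline\gamma$ with $\xh(\yvec)=\xh(\ytildevec)=:z_0$, and fix $i_0\in\{1,\dots,N\}$. Let $\zvec$ be $\yvec$ with its $i_0$-th block replaced by $\tilde y_{i_0}$, and $\tilde\zvec$ be $\ytildevec$ with its $i_0$-th block replaced by $y_{i_0}$ (these are the transpositions of Definition~\ref{def: c-monotonicity} with $S=\{i_0\}$). Since $z_0$ is the barycenter of $\yvec$ and of $\ytildevec$, we have $c_h(\yvec)+c_h(\ytildevec)=\sum_{i}\lambda_i\bigl(h(y_i-z_0)+h(\tilde y_i-z_0)\bigr)$; on the other hand, by definition of $c_h$ as a minimum, $c_h(\zvec)\le\sum_i\lambda_i h(z_i-z_0)$ and $c_h(\tilde\zvec)\le\sum_i\lambda_i h(\tilde z_i-z_0)$, and the two right-hand sides add up to exactly $c_h(\yvec)+c_h(\ytildevec)$. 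Since $\spt\overline\gamma$ is $c_h$-monotone, $c_h(\yvec)+c_h(\ytildevec)\le c_h(\zvec)+c_h(\tilde\zvec)$, so all these inequalities are in fact equalities; in particular $z_0$ minimizes $w\mapsto\sum_i\lambda_i h(z_i-w)$, i.e.\ $\xh(\zvec)=z_0$, whence $\sum_i\lambda_i Dh(z_i-z_0)=0$. Subtracting the analogous identity $\sum_i\lambda_i Dh(y_i-z_0)=0$ leaves $\lambda_{i_0}\bigl(Dh(\tilde y_{i_0}-z_0)-Dh(y_{i_0}-z_0)\bigr)=0$, and injectivity of $Dh$ (from strict convexity \ref{A2}) gives $y_{i_0}=\tilde y_{i_0}$. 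As $i_0$ was arbitrary, $\yvec=\ytildevec$.

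Finally, I would treat the remaining regime $|\yvec-\ytildevec|\ge\delta$ by compactness: the set $\{(\yvec,\ytildevec)\in(\spt\overline\gamma)^2:|\yvec-\ytildevec|\ge\delta\}$ is compact, and $(\yvec,\ytildevec)\mapsto|\xh(\yvec)-\xh(\ytildevec)|$ is continuous and, by the injectivity just established, strictly positive on it, hence bounded below there by some $m>0$. With $D:=\diam\spt\overline\gamma<\infty$ this gives $|\xh(\yvec)-\xh(\ytildevec)|\ge m\ge (m/D)\,|\yvec-\ytildevec|$ whenever $|\yvec-\ytildevec|\ge\delta$, and the corollary follows with $L:=\min\{L_0,\,m/D\}$. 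I expect the injectivity argument of the second step to be the only genuinely non-routine point; the first and third steps are standard compactness bookkeeping on top of Proposition~\ref{prop:lowerbounddistancebary}.
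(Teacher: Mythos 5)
Your proof is correct, and it takes a genuinely different — and in fact more careful — route than the paper's. The paper's own proof is a one-liner: cover the compact set $\spt\overline\gamma$ by finitely many of the balls $B(\xvec^k,r(\xvec^k))$ from Proposition~\ref{prop:lowerbounddistancebary} and set $L=\min_k L(\xvec^k)$. As written, this only controls pairs $\yvec,\ytildevec$ that happen to lie in a \emph{common} ball of the cover; the case of two support points far apart from each other is not addressed. You notice exactly this gap and close it. Your decomposition is: (i) a half-radius covering trick, which is the standard way to make the paper's finite-cover idea rigorous in the near-diagonal regime $|\yvec-\ytildevec|<\delta$; (ii) a standalone proof that $\xh$ is injective on any $c_h$-monotone set, obtained by a direct transposition argument ($S=\{i_0\}$) together with the first-order characterization of $\xh$ and strict monotonicity of $Dh$; and (iii) a compactness argument off the diagonal, where injectivity and continuity give a positive lower bound $m$, and $\diam\spt\overline\gamma<\infty$ converts $m$ into a linear bound. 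Step (ii) is the genuinely new idea relative to the paper's proof and is entirely elementary — it uses only $c_h$-monotonicity and assumptions \ref{A2}, \ref{A4}, not the Hessian non-degeneracy \ref{A5}. It is worth noting that the paper instead \emph{deduces} injectivity as a corollary of \eqref{eq:inverselipschitz} in the remark following the statement, whereas you prove it independently and use it as an input; this inverts the logical order but is perfectly sound. In summary: same endpoint, but your argument is complete where the paper's is terse, at the cost of being longer; the extra injectivity lemma you isolate is clean and could be of independent interest.
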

 \begin{proof}
 We observe that  $\spt\overline\gamma\subset \bigcup_{\xvec\in\spt\overline\gamma}B(\xvec, r(\xvec))$, where $B(\xvec, r(\xvec))$ are the ones of Proposition \ref{prop:lowerbounddistancebary}. By compactness there exist $\xvec^1,\dots,\xvec^K$, for some $K\in\NN$, such that of $\spt\overline\gamma\subset \bigcup_{k=1}^KB(\xvec^k, r(\xvec^k))$. Therefore it is enough to choose $L=\min_{k}\{L(\xvec^k)\}$.
 \end{proof}
 \begin{lemma}\label{lemma:diameter}
 Let $\overline\gamma$ be $c_h$-monotone. If $E\subset\RR^d$ is such that $\diam(E)<\delta$ for some $\delta>0$, then 
 \[\diam(\pi_1(\xh^{-1}(E)\cap\spt\overline\gamma) )<\frac \delta L,\]
 where $L>0$ is the same of \eqref{eq:inverselipschitz}. 
 \end{lemma}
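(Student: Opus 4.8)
The plan is to read the statement off directly from the global inverse Lipschitz bound of Corollary~\ref{cor:lowerbounddistancebary}, which is available precisely because $\overline\gamma$ is assumed $c_h$-monotone. If $\xh^{-1}(E)\cap\spt\overline\gamma=\emptyset$ the set $\pi_1(\xh^{-1}(E)\cap\spt\overline\gamma)$ is empty and its diameter is $0<\delta/L$, so we may assume it is nonempty.

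First I would pick two arbitrary points $a,b\in\pi_1(\xh^{-1}(E)\cap\spt\overline\gamma)$ and lift them to full configurations $\yvec=(\ydots)$, $\ytildevec=(\ytildedots)\in\xh^{-1}(E)\cap\spt\overline\gamma$ with $y_1=a$ and $\tilde y_1=b$. By the definition of $\xh^{-1}(E)$ we have $\xh(\yvec)\in E$ and $\xh(\ytildevec)\in E$, hence $|\xh(\yvec)-\xh(\ytildevec)|\le\diam(E)<\delta$. Applying \eqref{eq:inverselipschitz} from Corollary~\ref{cor:lowerbounddistancebary} gives
\[
|\yvec-\ytildevec|\le \frac1L\,|\xh(\yvec)-\xh(\ytildevec)|\le\frac{\diam(E)}{L}<\frac\delta L.
\]
Since $|a-b|=|y_1-\tilde y_1|\le|\yvec-\ytildevec|$, this yields $|a-b|<\delta/L$, and taking the supremum over all such pairs $a,b$ gives $\diam(\pi_1(\xh^{-1}(E)\cap\spt\overline\gamma))\le\diam(E)/L<\delta/L$, as claimed.

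There is essentially no obstacle here: the whole content of the lemma is already contained in the quantitative injectivity of the barycenter map on $\spt\overline\gamma$ established in Corollary~\ref{cor:lowerbounddistancebary}, and the only bookkeeping is the trivial empty case together with the observation that the constant $L$ in the statement is literally the one from \eqref{eq:inverselipschitz}. Accordingly the proof should take only two or three lines.
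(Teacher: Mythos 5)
Your proof is correct and follows essentially the same route as the paper's: both reduce immediately to the global inverse Lipschitz estimate \eqref{eq:inverselipschitz} of Corollary~\ref{cor:lowerbounddistancebary}, handle the trivial empty case, and conclude by bounding $|\pi_1(w)-\pi_1(\tilde w)|\le|w-\tilde w|$ and applying the estimate. Your slightly more careful bookkeeping, passing through $\diam(E)/L$ before taking the supremum, cleanly yields the strict inequality claimed in the statement.
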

 \begin{proof}
Let us call $\tilde E = \xh^{-1}(E)\cap\spt\overline\gamma$. If $\tilde E=\emptyset$, we set $\diam(\pi_1(\tilde E))=0$, which is consistent with the definition of Hausdorff measure. Otherwise let $w,\tilde w\in\tilde E$. Then there exist $z,\tilde z \in E$ such that $w\in \xh^{-1}(\{z\})\cap\spt\overline\gamma$ and $\tilde w\in \xh^{-1}(\{\tilde z\})\cap\spt\overline\gamma$. By Corollary \ref{cor:lowerbounddistancebary} we have that 
 \[|\pi_1(w)-\pi_1(\tilde w)|\le|w-\tilde w|\le \frac{1}{L} |z-\tilde z|\le \frac\delta L .\]
 \end{proof}
 \begin{remark}
 We notice that Corollary \ref{cor:lowerbounddistancebary} implies that given an optimal plan $\overline\gamma$, the set $\xh^{-1}(\{z\})\cap\spt\overline\gamma$ is a singleton (if not empty)  for every $z\in\RRd$, that is, the function $\xh$ restricted to $\spt\overline\gamma$ is invertible and the inverse is Lipschitz. 
 \end{remark}
 \begin{theorem}\label{th:absolutecontinuity}
Let $\overline\gamma$ be $c_h$-monotone and  $\mu_1\ll \LL^d$. Then 
 \begin{equation*}
     \overline\nu:=(\xh)_{\sharp}\gamma\ll \LL^d.
 \end{equation*}
 Notice that by Proposition \ref{prop:equivalence}, $\overline\nu$ is an $h$-Wasserstein barycenter (see Definition \ref{def:hwassbary}), i.e.  a solution of \eqref{C2Mhbary}.
 \end{theorem}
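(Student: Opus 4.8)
The plan is to show directly that $\overline\nu$ sends $\LL^d$-null sets to null sets, transferring the question to the first marginal $\mu_1$ and using the inverse Lipschitz bound of Corollary~\ref{cor:lowerbounddistancebary} --- in the packaged form of Lemma~\ref{lemma:diameter} --- to control the projected preimages.

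First I would fix a Borel set $A\subseteq\RRd$ with $\LL^d(A)=0$ and reduce the claim $\overline\nu(A)=0$ to the estimate $\LL^d\bigl(\pi_1(\xh^{-1}(A)\cap\spt\overline\gamma)\bigr)=0$. Since $\overline\gamma$ is concentrated on $\spt\overline\gamma$ and every Borel set $B$ satisfies $B\subseteq\pi_1^{-1}(\pi_1(B))$, one has $\overline\nu(A)=\overline\gamma(\xh^{-1}(A))=\overline\gamma(\xh^{-1}(A)\cap\spt\overline\gamma)\le\mu_1\bigl(\pi_1(\xh^{-1}(A)\cap\spt\overline\gamma)\bigr)$. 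Because $\mu_i\in\Prob(X_i)$ with $X_i$ compact, $\spt\overline\gamma$ is compact, so $\pi_1(\xh^{-1}(A)\cap\spt\overline\gamma)$ is a continuous image of a Borel set, hence analytic and $\mu_1$-measurable; the hypothesis $\mu_1\ll\LL^d$ then completes the reduction.

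Next I would run a covering argument. As $\LL^d(A)=0$, also $\HH^d(A)=0$, so for every $\eta>0$ there is a countable cover $A\subseteq\bigcup_j E_j$ by open sets with $\sum_j(\diam E_j)^d<\eta$. Then $\xh^{-1}(A)\cap\spt\overline\gamma\subseteq\bigcup_j\bigl(\xh^{-1}(E_j)\cap\spt\overline\gamma\bigr)$, and Lemma~\ref{lemma:diameter} gives $\diam\bigl(\pi_1(\xh^{-1}(E_j)\cap\spt\overline\gamma)\bigr)\le L^{-1}\diam E_j$. A set of diameter $\rho$ lies in a ball of radius $\rho$ and so has Lebesgue measure at most $\omega_d\rho^d$; summing yields $\LL^d\bigl(\pi_1(\xh^{-1}(A)\cap\spt\overline\gamma)\bigr)\le\omega_d L^{-d}\sum_j(\diam E_j)^d<\omega_d L^{-d}\eta$, and letting $\eta\to0$ gives the desired vanishing. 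Combined with the reduction this proves $\overline\nu(A)=0$, hence $\overline\nu\ll\LL^d$; the final assertion of the statement is then immediate from Proposition~\ref{prop:equivalence}.

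The only nonroutine point is the measurability bookkeeping in the reduction step --- ensuring that $\xh^{-1}(A)\cap\spt\overline\gamma$ and its projection are measurable against the relevant Radon measures --- which is dispatched by compactness of $\spt\overline\gamma$ together with universal measurability of analytic sets. All the geometric substance has already been isolated in Corollary~\ref{cor:lowerbounddistancebary} and Lemma~\ref{lemma:diameter}, so what remains is the textbook fact that a Lipschitz map does not increase $d$-dimensional Hausdorff measure.
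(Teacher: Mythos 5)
Your argument is correct and follows essentially the same route as the paper: reduce $\overline\nu(A)$ to $\mu_1$ of the projected preimage $\pi_1\bigl(\xh^{-1}(A)\cap\spt\overline\gamma\bigr)$, then run a covering argument using Lemma~\ref{lemma:diameter} to deduce that this projection is $\LL^d$-null. The only cosmetic differences are that you bound $\LL^d$ directly via balls of controlled radius rather than passing through the $\delta$-approximating Hausdorff premeasures $\HH^d_\delta$ as the paper does, and that you spell out the measurability bookkeeping (analyticity of the projection) which the paper leaves implicit.
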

 \begin{proof}
 Let us consider a set $E$ such that $\LL^d(E)=0$. Then we have
 \begin{align*}
(\xh)_\sharp\overline\gamma(E)=\overline\gamma(\xh^{-1}(E)\cap\spt\overline\gamma)&\le \overline\gamma(\pi_1(\xh^{-1}(E)\cap\spt\overline\gamma)\times\RR^{(N-1)d})\\&=\mu_1(\pi_1(\xh^{-1}(E)\cap\spt\overline\gamma)),
 \end{align*}
 We want to prove that $\mu_1(\pi_1(\xh^{-1}(E\cap\xh(\spt\overline\gamma)))=0$. Since  $\mu_1\ll \LL^d$, it is enough to prove that \begin{equation}\label{eq:hausdorffzero}
     \LL^d(\pi_1(\xh^{-1}(E)\cap\spt\overline\gamma))=\HH^d(\pi_1(\xh^{-1}(E)\cap\spt\overline\gamma))=0.
 \end{equation}
 Let $\{E_n\}_{n}$ be a countable cover of the set $E$, such that $\diam(E_n)<\delta$ for every $n$, for some $\delta>0$. Then $\{\pi_1(\xh^{-1}(E_n)\cap\spt\overline\gamma)\}_n$ is a cover of $\pi_1(\xh^{-1}(E)\cap\spt\overline\gamma)$ such that $\diam(\pi_1(\xh^{-1}(E_n)\cap\spt\overline\gamma))<\frac \delta L$, with $L>0$, by Lemma \ref{lemma:diameter}. By definition of Hausdorff measure this implies \eqref{eq:hausdorffzero}. For completeness we include the details: note that 
\begin{align*}
    \HH^d_{\frac \delta L}(\pi_1(\xh^{-1}(E)\cap\spt\overline\gamma))&\le c_d\sum\diam (\pi_1(\xh^{-1}(E_n)\cap\spt\overline\gamma))^d\\&\le c_d \frac{1}{L^d}\sum\diam (E_n)^d.
\end{align*}
By taking the infimum over all the countable covers $\{E_n\}_{n}$  with diameter less than $\delta$, \[\HH^d_{\frac \delta L}(\pi_1(\xh^{-1}(E)\cap\spt\overline\gamma))\le \frac{1}{L^d} \HH^d_{\delta}(E),\] and, passing limit for $\delta\to 0$, $\HH^d(\pi_1(\xh^{-1}(E)\cap\spt\overline\gamma))\le \frac{1}{L^d}\HH^d(E)=0$.
 \end{proof}

\begin{proposition}\label{prop:firstordoptsystem}
Let us assume that $\mu_1,\dots,\mu_N \ll \LL^d$.  Then, given an optimal plan $\overline\gamma$ for \eqref{eq:MMhbary}, the following first order optimality system holds for $\overline\gamma$-a.e. $\xvec$:
 \begin{equation}\label{eq:firstordoptsystem}
     \begin{cases}
      \lambda_1Dh(x_1-\xh(\xvec))=D\varphi_1(x_1),\\
      \vdots\\
      \lambda_NDh(x_N-\xh(\xvec))=D\varphi_N(x_N),\\
     \end{cases}
 \end{equation}
 where $\varphi_1,\dots,\varphi_N$ are $c_h$-conjugate Kantorovich potentials.
\end{proposition}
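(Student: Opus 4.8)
The plan is to combine the explicit formula for the partial derivatives of $c_h$ with the abstract first-order relation of Corollary~\ref{cor:firstorderopt}; the only nontrivial point is the almost-everywhere differentiability of the Kantorovich potentials, and this is exactly where the hypothesis $\mu_i\ll\LL^d$ for \emph{all} $i$ enters. First I would record that, under the standing assumptions \ref{A1}--\ref{A5}, the map $\xh$ is of class $C^1$ on all of $\RR^{Nd}$ and $c_h\in C^2(\RR^{Nd})$. Indeed, since $D^2h(z)>0$ for every $z$, the hypothesis of the Implicit Function Theorem in Remark~\ref{rmk:diffbary} is satisfied at every point, so $\xh$ is $C^1$ near each $\xvec$, and by \eqref{eq:firstderivativecost}
\begin{equation*}
  D_{x_i}c_h(\xvec)=\lambda_i Dh\bigl(x_i-\xh(\xvec)\bigr)\qquad\text{for every }\xvec=(\xdots)\in\RR^{Nd}.
\end{equation*}
In particular $c_h$ is continuous, nonnegative, and partially differentiable with respect to each $x_i$ at every point, so Corollary~\ref{cor:firstorderopt} is applicable to the cost $c_h$.

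Next I would take $c_h$-conjugate Kantorovich potentials $\varphi_1,\dots,\varphi_N$, which exist by Theorem~\ref{prop:duality} since the $X_i$ are compact, and show that each $\varphi_i$ is locally Lipschitz, hence (by Rademacher) differentiable $\LL^d$-a.e. By Definition~\ref{def:conjugate},
\begin{equation*}
  \varphi_i(x_i)=\inf_{\hat{\xvec}_i\in\prod_{j\neq i}X_j}\Bigl\{c_h(\xdots)-\sum_{j\neq i}\varphi_j(x_j)\Bigr\},
\end{equation*}
an infimum, over the compact set $\prod_{j\neq i}X_j$, of the functions $x_i\mapsto c_h(\xdots)-\sum_{j\neq i}\varphi_j(x_j)$. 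Since $\nabla_{x_i}c_h$ is continuous, it is bounded on bounded sets, so this family is equi-Lipschitz on bounded sets; an infimum of a family of $L$-Lipschitz functions is $L$-Lipschitz (it is finite-valued here because $\varphi_i\in C_0(X_i)$). Hence $\varphi_i$ is locally Lipschitz, and the set $N_i:=\{x_i:\varphi_i\text{ not differentiable at }x_i\}$ has $\LL^d(N_i)=0$, and therefore $\mu_i(N_i)=0$ because $\mu_i\ll\LL^d$. (The uniform bound on $D^2_{x_ix_i}c_h$ on compacts would alternatively give local semiconcavity of $\varphi_i$, with the same conclusion.)

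Finally I would conclude. Let $\overline\gamma$ be optimal for \eqref{eq:MMhbary}. Since $\pi^i_\sharp\overline\gamma=\mu_i$, the set $\{\xvec : x_i\in N_i\}$ is $\overline\gamma$-null for each $i$; taking the finite union over $i=1,\dots,N$, we get that for $\overline\gamma$-a.e.\ $\xvec$ each $\varphi_i$ is differentiable at $x_i$, while $c_h$ is partially differentiable at every point. Corollary~\ref{cor:firstorderopt} then gives $D_{x_i}c_h(\xvec)=D\varphi_i(x_i)$ for every $i$ at such $\xvec$, which by the formula of the first paragraph is precisely
\begin{equation*}
  \lambda_i Dh\bigl(x_i-\xh(\xvec)\bigr)=D\varphi_i(x_i),\qquad i=1,\dots,N,
\end{equation*}
i.e.\ the system \eqref{eq:firstordoptsystem} holds $\overline\gamma$-a.e. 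The main (and essentially only) obstacle is the a.e.\ differentiability of the potentials; everything else is a direct consequence of the regularity of $c_h$ established in Remark~\ref{rmk:diffbary}.
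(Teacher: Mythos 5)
Your proposal is correct and follows essentially the same route as the paper's proof: invoke Theorem~\ref{prop:duality} to obtain $c_h$-conjugate potentials, use the $C^1$-regularity of $c_h$ from Remark~\ref{rmk:diffbary} (via \ref{A5}) to get local Lipschitzness of the $\varphi_i$ as infima of an equi-Lipschitz family, deduce $\mu_i$-a.e.\ differentiability from Rademacher and $\mu_i\ll\LL^d$, and conclude via Corollary~\ref{cor:firstorderopt} together with the formula \eqref{eq:firstderivativecost}. The extra details you spell out (the explicit sets $N_i$ and their $\overline\gamma$-nullity through the marginal constraint, and the parenthetical semiconcavity remark) are sound but not materially different from the paper's argument.
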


\begin{proof}
We first observe that Proposition \ref{prop:duality} applies to  the problem \eqref{eq:MMhbary} with cost $c_h$. Therefore we can choose the Kantorovich potentials $\varphi_i$ to be $c_h$-conjugates (see Definition \ref{def:conjugate}). As $h$ satisfies assumption \ref{A5}, $c_h$ is continuously differentiable everywhere (see Remark \ref{rmk:diffbary}), and thus locally Lipschitz (more precisely Lipschitz on the compact set $\spt\overline\gamma$). The $\varphi_i$'s are therefore also locally Lispchitz, being the infimum of a family of locally Lipschitz functions with the same Lipschitz constant. This implies that every $\varphi_i$ is differentiable for $\LL^d$-a.e. $x_i$, and thus $\mu_i$-a.e. by absolute continuity. Since $\overline\gamma\in\Pi(\mu_1,\dots,\mu_N)$, Corollary \ref{cor:firstorderopt} can be applied for $\overline\gamma$-a.e. $\xvec$. The system of equations \eqref{eq:firstordoptsystem} is obtained noticing that, as showed in \eqref{eq:firstderivativecost}, $D_{x_i}c(\xdots)=\lambda_iDh(x_i-\xh(\xvec))$ for every $i$.
\end{proof}
{
\addtocontents{toc}{\setcounter{tocdepth}{-10}}
\section*{Acknowledgments}
{\small CB was funded by \emph{Deutsche Forschungsgemeinschaft (DFG -- German Research Foundation) -- Project-ID 195170736 -- TRR109}.  

This work was partially carried out while TR was a visiting research fellow at TU München, funded by 
\emph{Deutsche Forschungsgemeinschaft (DFG -- German Research Foundation) -- Project-ID 195170736 -- TRR109}.
TR would like to thank the Department of Mathematics at TUM for the kind hospitality. 
}
\addtocontents{toc}{\setcounter{tocdepth}{1}}
}

\bigskip
\end{document}